\newcommand{\PP}{\mathbb{P}}
\newcommand{\SSS}{\mathbb{S}}
\newcommand{\cL}{\mathcal{L}}
\newcommand{\NM}{\mathrm{NM}}
\newcommand{\Bad}{\mathrm{Bad}}
\newcommand{\Gr}{\mathrm{Gr}}
\DeclareMathOperator{\adj}{adj}
\DeclareMathOperator{\rank}{rank}
\DeclareMathOperator{\tr}{trace}
\title{Linear Spaces of Symmetric Matrices with Non-Maximal Maximum Likelihood Degree}
\author{Yuhan Jiang}
\address{Harvard University \\ email: yjiang@math.harvard.edu}
\author{Kathl\'{e}n Kohn}
\address{KTH Royal Institute of Technology, Stockholm \\ email: kathlen@kth.se}
\author{Rosa Winter}
\address{MPI for Mathematics in the Sciences, Leipzig\\ email: rosa.winter@mis.mpg.de}
\begin{document}
\begin{abstract}
    We study the maximum likelihood degree of linear concentration models in algebraic statistics. We relate the geometry of the reciprocal variety to that of semidefinite programming. We show that the Zariski closure in the Grassmanian of the set of linear spaces that do not attain their maximal possible maximum likelihood degree coincides with the Zariski closure of the set of linear spaces  defining a projection with non-closed image of the positive semidefinite cone. In particular, this shows that this closure is a union of coisotropic hypersurfaces.
\end{abstract}
\section{Introduction}

Maximum likelihood estimation is a widespread optimization approach to fit empirical data to a statistical model. 
The \emph{maximum likelihood degree} (or short, \emph{ML-degree}) of a model is the number of complex critical points of this optimization problem for generic empirical data~\cite{catanese2006maximum}.
The aim of this paper is to study models whose actual maximum likelihood degree differs from the expected one.

The models we consider are sets of multivariate Gaussian distributions with mean zero that are linear in the space of concentration matrices. 
The concentration matrices of such a model form a spectrahedron that is the intersection of a linear subspace $\cL$ of the space $\SSS^n$ of symmetric $n \times n$ matrices with the cone of positive definite matrices. 

The ML-degree of a model defined by a generic subspace $\cL \subset \SSS^n$ is known to be the degree of its \emph{reciprocal variety} that is parametrized by the inverses of all matrices in $\cL$ (see \cite[Theorem 2.3]{SU10}).
We say that a linear subspace $\cL \subset \SSS^n$
with that property is \emph{ML-maximal}.

\subsection{Main results}
We fix the bilinear pairing
$(X,Y)\mapsto  \tr(XY)$
on the vector space $\SSS^n$ of complex symmetric $n \times n$ matrices.
This is an inner product when
restricting to the real symmetric matrices.
For a linear subspace $\cL \subset \SSS^n$,
we write 
$$ \cL^\perp := \{ Y \in \SSS^n \mid \tr(XY)=0 \text{ for all } X \in \cL \} $$
for its \emph{annihilator} or \emph{polar linear space} with respect to the trace pairing.
Moreover, we consider the Zariski closure 
$$\cL^{-1} := \overline{\{ X^{-1} \mid X \in \cL, \; \rank(X) = n  \}} \subset \SSS^n$$
and call its projectivization $\PP \cL^{-1} \subset \PP \SSS^n$ the reciprocal variety of $\cL$.
The definition of $\cL^{-1}$ makes sense if $\cL$ contains at least one full-rank matrix.
We call such a linear space $\cL$ \emph{regular}.
We provide the following exact characterization of ML-maximal linear spaces in terms of the intersection of their reciprocal varieties and their polar spaces.
A formula for the ML-degree of $\cL$ in terms of Segre classes of this intersection is given in~\cite{AGKMS}.

\begin{thm}\label{thm: MLdefective equals non-empty intersection}
The ML-degree of a linear subspace $\cL \subset \SSS^n$ is at most the degree of its reciprocal variety.
Moreover, $\cL$ is ML-maximal (i.e., its ML-degree equals $\deg(\PP \cL^{-1})$)
if and only if $\cL^{-1} \cap \cL^\perp = \{ 0 \}$.
\end{thm}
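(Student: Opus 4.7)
The plan is to identify the ML-degree with the cardinality of an affine intersection in $\SSS^n$ and then pass to a projective compactification to apply B\'ezout's theorem.

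\emph{Critical equations.} On $\cL$, the Gaussian log-likelihood $\log\det(K) - \tr(SK)$ has differential $\tr\bigl((K^{-1}-S)H\bigr)$ along $H \in \cL$, so its critical points are exactly the full-rank $K \in \cL$ with $K^{-1} \in S + \cL^\perp$. The inverse map $\cL \dashrightarrow \cL^{-1}$ is birational, and a generic affine slice of the right codimension avoids its indeterminacy locus by a dimension count. Hence for generic $S$ the ML-degree equals $|\cL^{-1} \cap (S + \cL^\perp)|$.

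\emph{Compactification and B\'ezout.} Embed $\SSS^n$ as the chart $\{t=1\}$ of $\PP^M := \PP(\SSS^n \oplus \mathbb{C})$. Because $\cL^{-1}$ is a cone in $\SSS^n$, its projective closure $X \subset \PP^M$ is the cone over $\PP\cL^{-1}$ (sitting in the hyperplane at infinity $\PP\SSS^n$) with apex the origin $[0:1]$, so $\dim X = \dim \cL$ and $\deg X = \deg \PP\cL^{-1}$. The closure $L_S$ of $S + \cL^\perp$ is a projective linear subspace of codimension $\dim \cL$ whose trace on the hyperplane at infinity is precisely $\PP \cL^\perp$. Since $\dim X + \dim L_S = \dim \PP^M$, B\'ezout gives total intersection number $\deg X \cdot \deg L_S = \deg \PP \cL^{-1}$. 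The affine part of $X \cap L_S$ is $\cL^{-1} \cap (S + \cL^\perp)$, transverse for generic $S$ by Bertini, while the part at infinity is $\PP \cL^{-1} \cap \PP \cL^\perp$, independent of $S$. This immediately yields $\mathrm{ML\text{-}degree} \leq \deg \PP\cL^{-1}$, with equality when $\cL^{-1} \cap \cL^\perp = \{0\}$.

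\emph{Converse.} When $\PP \cL^{-1} \cap \PP \cL^\perp$ is nonempty, it sits inside every $X \cap L_S$ and absorbs strictly positive intersection contribution, forcing the affine count (and hence the ML-degree) to drop below $\deg \PP \cL^{-1}$. This is immediate when the intersection at infinity is zero-dimensional: each infinity point contributes at least $1$ to the B\'ezout total, leaving fewer affine points. The main technical obstacle is the positive-dimensional case, in which $X \cap L_S$ is no longer a proper intersection; here I would invoke Fulton--MacPherson refined intersection theory, where the excess-intersection class supported on the infinity component (whose degree is encoded by a Segre-class integral as in~\cite{AGKMS}) is strictly positive, guaranteeing the strict drop.
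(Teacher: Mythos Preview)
Your compactification is projectively equivalent to the paper's setup: projecting $\PP(\SSS^n\oplus\mathbb{C})$ from the apex $[0:1]$ onto the hyperplane at infinity $\PP\SSS^n$ sends your cone $X$ to $\PP\cL^{-1}$ and your $L_S$ isomorphically to $\PP(\mathrm{span}(\cL^\perp,S))$, which is exactly the $\PP L_S$ the paper uses. So the two arguments are the same B\'ezout count viewed from two sides; the paper phrases it as the generic fiber of the projection $\pi_{\cL^\perp}|_{\PP\cL^{-1}}$ (citing~\cite{AGKMS}), while you phrase it as an affine/at-infinity decomposition. Your ``Bertini'' for transversality of the affine part is really generic smoothness of the dominant map $\cL^{-1}\to\SSS^n/\cL^\perp$ in characteristic zero, which is fine; the paper instead cites~\cite{AGKMS} for reducedness of the generic fiber.

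The one place where your write-up is genuinely incomplete is the positive-dimensional case. Saying you ``would invoke Fulton--MacPherson'' and that the excess contribution is ``strictly positive'' is not a proof: positivity of the contribution of an excess component to a refined intersection product is not automatic in general, and even when intersecting with a linear space you need either Fulton's refined B\'ezout (e.g.\ the distinguished-variety bound in Fulton, Theorem~12.3) or a specialization argument. The paper sidesteps the machinery entirely with an elementary projective-geometry proposition (due to Ranestad): if an irreducible $X\subset\PP^m$ meets a linear space $L$ of complementary dimension in some positive-dimensional components together with $k$ isolated points, then $k<\deg X$. The proof repeatedly passes to a general linear superspace $L_i\supset L$ of the right codimension to peel off the top-dimensional excess components, each time strictly lowering the relevant degree. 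This gives the strict inequality $\text{ML-degree}(\cL)\le k<\deg(\PP\cL^{-1})$ without any intersection-theoretic black boxes, and is the main content you are missing.
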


In addition, we give alternative sufficient and necessary conditions for a linear space to be ML-maximal; see Remark~\ref{rem:ness+suff}.

Since generic linear spaces of symmetric matrices are ML-maximal, we want to study the fine structure of the complementary property.
For integers $k$ and $n$, we are interested in 
the set of all $k$-dimensional linear subspaces of $\SSS^n$ that are \emph{not} ML-maximal.
This subset of the Grassmannian $\Gr(k, \SSS^n)$ is neither Zariski closed nor open (see Remark~\ref{rem:notOpenNotClosed}).
Therefore, we study its Zariski closure $\NM_{k,n}$ in $\Gr(k, \SSS^n)$.

We show that this variety equals the \emph{bad locus} $\Bad_{k,n}$ studied in \cite[Section~3]{JS}.
The authors consider $k$-dimensional linear subspaces $\cL$ of the real symmetric $n \times n$ matrices $\SSS^n_{\mathbb{R}}$ 
and corresponding projections $\SSS^n_{\mathbb{R}} \to \mathrm{Hom}(\cL, \mathbb{R})$ dual to the inclusion $\cL \subset \SSS^n_{\mathbb{R}}$.
Such a linear space $\cL$
is called \emph{bad} if the image of the cone of positive semidefinite matrices
under the projection $\SSS^n_{\mathbb{R}} \to \mathrm{Hom}(\cL, \mathbb{R})$ is not closed.
Bad subspaces of $\SSS^n_{\mathbb{R}}$ are those for which strong duality in semidefinite programming fails, which has been thoroughly studied by Pataki~\cite{liu2015exact,pataki2013strong,pataki,pataki2019characterizing}.
The bad locus $\Bad_{k,n}$ is the Zariski closure in $\Gr(k, \SSS^n)$ of the set of $k$-dimensional bad subspaces of $\SSS^n_{\mathbb{R}}$.

\begin{thm}
\label{thm:nonMaximalEqualsBad}
The Zariski closure $\NM_{k,n}$ in $\Gr(k, \SSS^n)$ of the set of non-ML-maximal $k$-dimensional linear subspaces of $\SSS^n$ equals the 
bad locus $\Bad_{k,n}$.
\end{thm}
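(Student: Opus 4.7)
The plan is to match the two varieties by extracting a common witness datum at generic points. For $\NM_{k,n}$, Theorem~\ref{thm: MLdefective equals non-empty intersection} provides the clean algebraic handle: on the dense open of regular subspaces, $\cL$ is non-ML-maximal iff there exists $0 \neq Y \in \cL^{-1} \cap \cL^\perp$. Any such $Y$ must be rank-deficient, because otherwise $Y^{-1} \in \cL$ would give $\tr(Y \cdot Y^{-1}) = n$, while $Y \in \cL^\perp$ forces $\tr(Y \cdot Y^{-1}) = 0$. Hence $\NM_{k,n}$ is the Zariski closure of the locus on which there exists a path $X_t \in \cL$ of full-rank matrices with $X_t^{-1} \to Y$, where $Y$ is a nonzero rank-deficient element of $\cL^\perp$.

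For $\Bad_{k,n}$, I would invoke Pataki's characterization as formulated in~\cite{JS}: a real subspace $\cL$ is bad precisely when there is a nonzero rank-deficient PSD matrix $Y \in \cL^\perp$ obstructing the closedness of $\pi(\mathrm{PSD})$, and in the generic case such a $Y$ arises as a limit of inverses along a path in $\cL$ whose PSD representatives approach the boundary of the cone. Both conditions therefore single out a rank-deficient boundary matrix in $\cL^\perp$ that is simultaneously a reciprocal limit of elements of $\cL$.

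Given these parallel descriptions, both inclusions would follow. For $\Bad_{k,n} \subseteq \NM_{k,n}$, a Pataki witness places a nonzero $Y$ into $\cL^{-1} \cap \cL^\perp$, so Theorem~\ref{thm: MLdefective equals non-empty intersection} forces $\cL \in \NM_{k,n}$. For the reverse direction, starting from a real non-ML-maximal $\cL$ with witness $(X_t, Y)$, I would rescale $X_t$ so that $\pi(X_t)$ stays bounded and converges to a point outside $\pi(\mathrm{PSD})$, exhibiting $\cL$ as bad. Both arguments survive Zariski closure because the witness data depend algebraically on $\cL$.

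The main obstacle is reconciling the real and complex viewpoints: $\NM_{k,n}$ is cut out by complex algebraic conditions on the reciprocal variety, while $\Bad_{k,n}$ encodes non-closedness of a real convex cone. Both are Zariski closures of semialgebraic sets of real subspaces, so the key technical step is to show that real PSD witnesses populate each irreducible component of $\NM_{k,n}$ Zariski-densely. I expect this to require a careful deformation argument: the existence of a rank-deficient real PSD $Y$ together with a real path $X_t \to \infty$ realizing it is a semialgebraic condition, and one must show that it holds on a Zariski-dense subset of every component that meets the real points, using the algebraicity of the witness construction.
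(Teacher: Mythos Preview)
Your outline has the right shape---witness a rank-deficient $Y \in \cL^\perp$ that is also a reciprocal limit---but both inclusions contain a genuine gap that the paper closes with substantial work.

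For $\Bad_{k,n} \subseteq \NM_{k,n}$, you assert that the Pataki witness $Y \in \cL^\perp$ ``arises as a limit of inverses along a path in $\cL$.'' This is precisely the nontrivial content; nothing in Pataki's characterization says $Y \in \cL^{-1}$. The paper proves this (Lemma~\ref{lem:pointsInLinvernse}) by an explicit blow-up computation: for general $\cL$ containing $X$ of rank $s$ with $Y \in \cL^\perp$ of rank $n{-}s$ and $XY=0$, one must show that the total transform $\Gamma_\cL(X)$ fills out \emph{all} of $\{Z : XZ=0\}$. This requires two rounds of perturbation (constant and linear-in-$\varepsilon$ directions) and a check that the resulting image is not trapped in the reciprocal hypersurface $(I_{n-s}^\perp)^{-1}$. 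None of this is automatic from convex geometry.

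For $\NM_{k,n} \subseteq \Bad_{k,n}$, your rescaling argument is the wrong mechanism. A generic $\cL \in \NM_{k,n}$ is a complex subspace, and even over $\mathbb{R}$ the path $X_t$ witnessing $Y \in \cL^{-1}$ need not be positive semidefinite, so there is no reason $\pi(X_t)$ lies in $\pi(\mathrm{PSD})$ or that its limit lies outside. The paper avoids the convex picture entirely: it uses the purely algebraic characterization of ``not bad'' from~\cite{JS} (spectrahedral ranks satisfy $s(\cL)+s(\cL^\perp)=n$ together with a block-vanishing condition), assumes these hold, and then shows by adjugate perturbation that $Y \notin \Gamma_\cL(X)$, contradicting $Y \in \cL^{-1}$. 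The case $\rank X + \rank Y < n$ is handled separately by an inductive dimension count against the coisotropic hypersurfaces in the Pataki range (Lemma~\ref{lem:u+v not n}).

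Finally, the real/complex reconciliation you flag is not resolved by a deformation argument on semialgebraic witness loci; the paper bypasses it by using the description of $\Bad_{k,n}$ as a union of coisotropic hypersurfaces from~\cite[Theorem~11]{JS}, which is already a complex-algebraic statement. Your sketch would need that input, or something equivalent, to get off the ground.
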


We describe the irreducible components of $\NM_{k,n}$ in terms of the 
determinantal varieties $D_s$ of matrices of rank at most $s$. 
The \emph{coisotropic variety} in $\Gr(k, \SSS^n)$ associated to $D_s$  is the Zariski closure of the set of all $k$-dimensional linear subspaces of $\SSS^n$ that intersect  $D_s$ at some smooth point non-transversely.
For all $s$ as in the following corollary, the coisotropic variety associated to $D_s$ has codimension one in $\Gr(k, \SSS^n)$~\cite[Corollary~6]{Ko}.

\begin{cor}
\label{cor:coisotropicUnion}
The non-ML-maximal locus $\NM_{k,n}$ is the union of the coisotropic hypersurfaces in $\Gr(k, \SSS^n)$ 
associated to the determinantal varieties $D_s$, where $s$ ranges over the integers such that $\binom{n-s+1}{2} < k \leq \binom{n+1}{2} - \binom{s+1}{2}$.

In particular, a generic linear space $\cL \in \Gr(k, \SSS^n)$ is ML-maximal.
\end{cor}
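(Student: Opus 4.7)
The plan is to combine Theorem~\ref{thm:nonMaximalEqualsBad} with a coisotropic description of the bad locus. By Theorem~\ref{thm:nonMaximalEqualsBad} we have $\NM_{k,n} = \Bad_{k,n}$, so it suffices to identify $\Bad_{k,n}$ with the claimed union of coisotropic hypersurfaces. The key input is Pataki's characterization~\cite{pataki,pataki2013strong}: a real subspace $\cL \subset \SSS^n_\mathbb{R}$ is bad precisely when there exist a PSD matrix $A \in \cL$ of rank $s$ for some $1 \leq s \leq n-1$ together with a nonzero PSD matrix $B \in \cL^\perp$ satisfying $AB = 0$.

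At a smooth point $A \in D_s$, the conormal space in $\SSS^n$ is $N_A D_s = \{B \in \SSS^n : AB = 0\}$, so the coisotropic condition for $\cL$ with respect to $D_s$ is exactly the existence of $A \in \cL \cap D_s^{\mathrm{sm}}$ and a nonzero $B \in \cL^\perp \cap N_A D_s$. Dropping the PSD requirement in Pataki's condition and passing to the complex Zariski closure in $\Gr(k, \SSS^n)$ thus produces the coisotropic variety $C_s$ of $D_s$; conversely, every complex coisotropic pair can be approximated by real PSD pairs via simultaneous eigenspace decompositions, so no component is missed. Hence $\Bad_{k,n} = \bigcup_s C_s$. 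By \cite[Corollary~6]{Ko}, $C_s$ is a proper hypersurface precisely in the stated range $\binom{n-s+1}{2} < k \leq \binom{n+1}{2} - \binom{s+1}{2}$; outside this range $C_s$ is either empty or equals the whole Grassmannian and therefore contributes nothing new. The ``in particular'' assertion is then immediate, since a finite union of proper hypersurfaces in the irreducible variety $\Gr(k, \SSS^n)$ has dense Zariski-open complement, on which $\cL$ is ML-maximal by Theorem~\ref{thm: MLdefective equals non-empty intersection}.

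The main obstacle will be the identification of the real-semialgebraic bad condition with the complex coisotropic variety $C_s$: one must verify both that every complex pair $(A, B)$ with $\rank A = s$ and $AB = 0$ is a limit of real PSD pairs in $\cL \times \cL^\perp$ (so no complex component is spurious), and that the real Zariski closure in $\Gr(k, \SSS^n)$ coincides with the complex one (so no component is missing). A cleaner self-contained alternative, avoiding Theorem~\ref{thm:nonMaximalEqualsBad} altogether, is to apply Theorem~\ref{thm: MLdefective equals non-empty intersection} directly and analyze the rank-stratification of $\cL^{-1}$: any nonzero $B \in \cL^{-1} \cap \cL^\perp$ projectively equals $\lim_{t \to 0} X_t^{-1}$ for some $X_t \in \cL$ of full rank with $A := \lim X_t$ of intermediate rank $s$, forcing $AB = 0$ and exhibiting $\cL$ in $C_s$; the converse then requires, for a generic coisotropic pair $(A,B)$, constructing such a family inside $\cL$ whose inverses limit to $B$.
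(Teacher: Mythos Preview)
The paper's proof is two lines: it cites \cite[Theorem~11]{JS}, which already establishes that $\Bad_{k,n}$ is exactly the union of the coisotropic hypersurfaces of $D_s$ for $s$ in the stated range, and then applies Theorem~\ref{thm:nonMaximalEqualsBad}. Your first step (invoking Theorem~\ref{thm:nonMaximalEqualsBad}) matches, but instead of citing the external result you attempt to re-derive the coisotropic description of $\Bad_{k,n}$ from Pataki's characterization, and this is where the argument breaks.

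The substantive gap is your claim that ``outside this range $C_s$ is either empty or equals the whole Grassmannian and therefore contributes nothing new.'' This is false at the boundary $k = \binom{n-s+1}{2}$: there the coisotropic variety of $D_s$ is the Chow hypersurface $\mathrm{Ch}_0(D_s)$, a genuine proper hypersurface, and it is \emph{not} contained in $\Bad_{k,n}$. Concretely, dropping the PSD requirement in Pataki's condition and taking the Zariski closure produces the set
\[
C_{k,n} = \{\cL \in \Gr(k,\SSS^n) \mid \exists\, (X,Y) \in \PP\cL \times \PP\cL^\perp : XY = 0\},
\]
and the paper shows (Remark~\ref{rem:involution}, Example~\ref{exa:NMfailPolarity}) that $C_{k,n}$ can strictly contain $\Bad_{k,n}$: for $k=n=3$, the net of diagonal matrices lies in $C_{3,3}$ (take $X = \mathrm{diag}(1,0,0)$ and $Y$ supported on the $(2,3)$ entries) but not in $\Bad_{3,3} = \NM_{3,3}$, because no nonzero $Y \in \cL^\perp$ is PSD. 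So the PSD hypothesis in Pataki is doing real work, and discarding it is not harmless. Your citation of \cite[Corollary~6]{Ko} only asserts that the coisotropic variety is a hypersurface \emph{in} the range; it does not say it degenerates outside.

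You also flag, correctly, that the passage from the real semialgebraic bad condition to the complex coisotropic variety needs justification in both directions; this is precisely the content of \cite[Theorem~11]{JS}, which the paper simply cites. Your alternative plan in the last paragraph (bypassing Theorem~\ref{thm:nonMaximalEqualsBad} and working directly with limits $X_t^{-1} \to B$) is essentially a sketch of Section~\ref{sec:zariskiClosure} itself---the ``converse'' you allude to is Lemma~\ref{lem:pointsInLinvernse}, and it requires the detailed blow-up computation carried out there, not a one-line remark.
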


\begin{proof}
By \cite[Theorem~11]{JS}, the bad locus $\Bad_{k,n}$ is the union of the coisotropic hypersurfaces described above.
Hence, Theorem~\ref{thm:nonMaximalEqualsBad} implies the assertion.
\end{proof}

\begin{rem}\label{rem:general known}
Theorem~\ref{thm:nonMaximalEqualsBad} and Corollary~\ref{cor:coisotropicUnion} provide a geometric proof for the ML-maximality of generic linear spaces of symmetric matrices.
An alternative argument, using different techniques from commutative algebra, is given in~\cite[Theorem~2.3]{SU10}.
ML-maximality for generic linear spaces has also been conjectured in a more general setting in \cite[Conjecture 5.8]{MSUZ16}, and a positive answer has since been known to follow from a result of Teissier.
However, this has not been written down in the current literature, and therefore we include this argument in Section \ref{sec: Teissier}.
We also note that Theorem~\ref{thm: MLdefective equals non-empty intersection} is in fact a special case of~\cite[Theorem~5.5]{MSUZ16}, but we provide  a more detailed argument.
\end{rem}

We prove Theorem~\ref{thm: MLdefective equals non-empty intersection} in Section~\ref{sec:mldegree}
and Theorem~\ref{thm:nonMaximalEqualsBad} in Section~\ref{sec:zariskiClosure}.

\section{Maximum likelihood estimation}
\label{sec:mldegree}
In this section we prove Theorem \ref{thm: MLdefective equals non-empty intersection}. The \emph{maximum likelihood degree (ML-degree)} of a real linear space $\cL \subset \SSS^n$ 
is the number of complex critical points of the log-likelihood function
\begin{align*}
    \ell_S: \cL & \longrightarrow \mathbb{R}, \\
    X &\longmapsto \log \det(X) - \tr(SX)
\end{align*}
for a generic matrix $S \in \SSS^n$.
Our main tool to prove Theorem~\ref{thm: MLdefective equals non-empty intersection} is the projection away from~$\mathcal{L}^\perp:$
\begin{align*}
    \pi_{\mathcal{L}^\perp} : \mathbb{P} \mathbb{S}^n &\,\dashrightarrow \left\lbrace
    \mathcal{K} \in \mathrm{Gr}(\dim \mathcal{L}^\perp + 1,  \mathbb{S}^n) \mid  \mathcal{L}^\perp \subset \mathcal{K}
    \right\rbrace \cong \mathbb{P}^{\dim \mathbb{P}\mathcal{L}}, \\
    S &\longmapsto L_S := \mathrm{span}\lbrace \mathcal{L}^\perp, S \rbrace.
\end{align*}
In~\cite{AGKMS} it is shown that the ML-degree of  $\mathcal{L}$ is the degree of this projection restricted to the reciprocal variety $\mathbb{P} \mathcal{L}^{-1}$. 
In other words, the ML-degree of $\mathcal{L}$ is the cardinality of the generic fiber of the restricted projection $\pi_{\mathcal{L}^\perp} |_{\mathbb{P} \mathcal{L}^{-1}}$:
\begin{align}
\label{eq:genFiber}
\PP \left( L_S \cap \mathcal{L}^{-1}   \setminus \mathcal{L}^\perp \right) \text{ for generic } S \in \PP \mathbb{S}^n.  
\end{align}

\begin{proof}[Proof of Theorem \ref{thm: MLdefective equals non-empty intersection}.]
Let us first assume that we have $\PP \mathcal{L}^{-1} \cap \PP \mathcal{L}^\perp = \emptyset$.
Then it follows from the above that the ML-degree of $\mathcal{L}$ 
is the cardinality of the intersection $\PP L_S \cap \PP \mathcal{L}^{-1}$ for generic $ S \in \PP \mathbb{S}^n $.
Since the dimension of the projective space $\PP L_S$ is the codimension of $\PP \mathcal{L}^{-1}$, they intersect in either $\deg(\PP \mathcal{L}^{-1})$ many points (counted with multiplicity) or in infinitely many points.
The latter cannot happen for generic $S$, since domain and codomain of the map $\pi_{\mathcal{L}^\perp} |_{\mathbb{P} \mathcal{L}^{-1}}$ have the same dimension, 
so its generic fiber~\eqref{eq:genFiber} must be finite.
Thus, the intersection $\PP L_S \cap \PP \mathcal{L}^{-1}$ consists of $\deg(\PP \mathcal{L}^{-1})$ many points, counted with multiplicity. 
In~\cite{AGKMS} it is shown that the generic fiber~\eqref{eq:genFiber} is reduced,
so $\PP L_S \cap \PP \mathcal{L}^{-1}$ consists of $\deg(\PP \mathcal{L}^{-1})$ distinct points for generic $S$,
and we conclude that we have ML-degree$(\mathcal{L}) = \deg(\PP \mathcal{L}^{-1})$.

Conversely, we assume that the intersection $\PP \mathcal{L}^{-1} \cap \PP \mathcal{L}^\perp$ is non-empty. If it is finite, then, by the same reasoning as before, the intersection $\PP L_S \cap \PP \mathcal{L}^{-1}$ is again finite for generic $S$, and thus must consist of $\deg(\PP \mathcal{L}^{-1})$ many points (counted with multiplicity).
Since $\PP \mathcal{L}^{-1} \cap \PP \mathcal{L}^\perp \neq \emptyset$, we see that 
$\PP L_S \cap \PP \mathcal{L}^{-1}$ consists of strictly more points than~\eqref{eq:genFiber}.
All in all, we have for generic $S$ that
\begin{align*}
    \text{ML-degree}(\mathcal{L}) = |\PP \left(  L_S \cap \left(\mathcal{L}^{-1} \setminus \mathcal{L}^\perp\right) \right) |
    < |\PP L_S \cap \PP \mathcal{L}^{-1}| 
    \leq \deg(\PP \mathcal{L}^{-1}).
\end{align*}

Hence, we are left to consider the case when the intersection $\PP \mathcal{L}^{-1} \cap \PP \mathcal{L}^\perp$ is infinite.
Since the generic fiber~\eqref{eq:genFiber} is finite, the intersection $\PP L_S \cap \PP \mathcal{L}^{-1}$ consists of positive-dimensional components inside $\PP \mathcal{L}^\perp$ as well as $k$ points, among which those outside of $\PP \mathcal{L}^\perp$ contribute to the ML-degree of $\mathcal{L}$.
From the following standard fact from projective geometry it follows that we have $\text{ML-degree}(\mathcal{L}) \leq k < \deg(\PP \mathcal{L}^{-1})$.
\end{proof}

\begin{prop}
Let $X \subset \PP^m$ be an irreducible projective variety of degree~$d$, and let $L \subset \PP^m$ be a projective subspace of complementary dimension (i.e., $\dim L = m-\dim X$). 
If the intersection of $X$ and $L$ consists of positive dimensional components and $k$ points, then we have $k<d$.
\end{prop}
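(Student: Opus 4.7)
The plan is to use a deformation argument. Let $L'$ be a generic linear subspace of the same dimension as $L$; by Bertini's theorem (equivalently, Kleiman's transversality), $X \cap L'$ consists of exactly $d$ distinct reduced points. The idea is to connect $L$ to such a generic $L'$ by a $1$-parameter family $\{L_t\}_{t \in \PP^1}$ with $L_0 = L$, and to show that as $t \to 0$ the $d$ generic intersection points specialize so that each isolated $p_i$ absorbs at least one of them and at least one more is absorbed by a positive-dimensional component, yielding $d \geq k + 1$.

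Concretely, I would form the Zariski closure of the ``horizontal'' part of the incidence variety,
\[
\mathcal{J} \;:=\; \overline{\{(x,t) \in X \times \PP^1 \mid t \ne 0,\; x \in L_t\}}.
\]
Then $\mathcal{J}$ is pure of dimension one and $\pi \colon \mathcal{J} \to \PP^1$ is a finite surjective morphism of degree $d$. Hence the scheme-theoretic fiber $\pi^{-1}(0)$ has length exactly $d$ and is set-theoretically supported in $X \cap L$. Each $p_i$ lies in this support by a continuity argument: on a small Euclidean neighborhood in which $p_i$ is the only point of $X \cap L$, the intersection $X \cap L_t$ must remain non-empty for $t$ close to $0$, producing a point of $\pi^{-1}(0)$ at $p_i$.

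The main obstacle is to show that some point of a positive-dimensional component $Y$ also lies in this support; a priori, $Y$ could appear only as an ``excess'' piece of the full incidence variety over $t = 0$, unrelated to any generic intersection point. The cleanest way to resolve this is via Fulton's refined B\'ezout theorem (\emph{Intersection Theory}, Theorem~12.3), which expresses the intersection class $[X]\cdot[L]$ of degree $d$ as a sum of effective $0$-cycles $\alpha_Z$ supported on distinguished subvarieties of $X \cap L$, with every irreducible component of $X \cap L$ appearing as such a $Z$ and contributing $\deg(\alpha_Z) \ge 1$. Summing over components then yields $d \ge k + (\#\text{positive-dimensional components of } X \cap L) \ge k+1$, which is the desired strict inequality.
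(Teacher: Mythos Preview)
Your appeal to Fulton's refined B\'ezout theorem is correct and settles the matter in one stroke: once you know that each irreducible component of $X \cap L$ contributes at least $1$ to the degree-$d$ intersection class $[X]\cdot[L]$, you get $d \geq k + (\text{number of positive-dimensional components}) \geq k + 1$. You were also right to flag the gap in the bare deformation argument; without further input there is nothing preventing all $d$ nearby intersection points from specializing onto the isolated $p_i$, so the family $\{L_t\}$ alone does not finish the job.

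The paper takes a different, more elementary route that avoids citing any intersection-theoretic machinery. It runs an iterative construction: let $Y_1$ be the union of maximal-dimensional components of $X \cap L$ and set $c_1 = \operatorname{codim}_X Y_1$; choose a \emph{general} linear space $L_1 \supset L$ of codimension $c_1$ in $\PP^m$. Then $X \cap L_1$ is pure of codimension $c_1$ and decomposes as $Y_1 \cup X_1$ with $\deg X_1 = d - \deg Y_1 < d$, while $X_1 \cap L$ still contains the $k$ isolated points and $X_1$, $L$ have complementary dimension inside $L_1$. Iterating with strictly decreasing dimensions eventually reaches some $X_i$ with $X_i \cap L$ finite, whence $k \leq \deg X_i < d$. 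This is essentially a by-hand version of the splitting underlying refined B\'ezout (in the spirit of the St\"uckrad--Vogel construction), so the two proofs are close relatives: yours invokes the packaged theorem, the paper unwinds the same mechanism explicitly. The paper's version keeps the argument entirely self-contained; yours makes the intersection-theoretic content transparent and would apply verbatim with any subvariety of complementary dimension in place of the linear space $L$.
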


\begin{proof}
The following argument is due to Kristian Ranestad.

If $k=0$, there is nothing to show.
Hence, we assume from now on that $k>0$.
Let $Y_1$ be the union of all maximal-dimensional irreducible components of the intersection $X \cap L$.
We denote the remaining lower-dimensional components by $Z_1$ (i.e. $X \cap L = Y_1 \cup Z_1$), so in particular $Z_1$ contains the $k$ points.
The codimension $c_1$ of $Y_1$ in $X$ satisfies $0 < c_1 < \dim X$.
We consider a general projective space $L_1 \subset  \PP^m$ of codimension $c_1$ that contains $L$.
All components in the intersection $X \cap L_1$ have codimension $c_1$ in $X$. 
The latter can be seen by iteratively intersecting $X$ with general hyperplanes $H_1, \ldots, H_{c_1}$ containing $L$:
at each step, the irreducible components of $X \cap H_1 \cap \ldots \cap H_{i-1}$ are not contained in $H_i$ such that intersecting with $H_i$ reduces the dimension by $1$.

Some of the irreducible components in $X \cap L_1$ form $Y_1$.
We denote the remaining components by $X_1$ (i.e., $X \cap L_1 = Y_1 \cup X_1$).
Recall that $d$ is the degree of $X$; from the equality $d = \deg (X \cap L_1) = \deg Y_1 + \deg X_1$
it follows that $\deg X_1<d$.
Moreover, from
$Y_1 \cup Z_1 = X \cap L = (Y_1 \cup X_1) \cap L = Y_1 \cup (X_1 \cap L),$
it follows that we have $X_1 \cap L = (Y_1 \cap X_1) \cup Z_1$.
In particular, the intersection $X_1 \cap L$ contains the $k$ points.
We also note that $X_1$ and $L$ have complementary dimension inside $L_1$:
$\mathrm{codim}_{L_1}(X_1) = \dim L_1 - \dim X_1 = (m-c_1) - (\dim X - c_1)
= m - \dim X = \dim L$.
Hence, if the intersection $X_1 \cap L$ is finite, then we have shown that
$k \leq \deg X_1 < d$, so we are done.

Otherwise, if the intersection $X_1 \cap L$ is not finite, 
we consider the union $Y_2$ of the maximal-dimensional irreducible components of $X_1 \cap L$.
We observe that $\dim X_1 < \dim X$ and $\dim Y_2 < \dim Y_1$.
Now we repeat our construction above:
We let $Z_2$ be the remaining irreducible components of $X_1 \cap L = Y_2 \cup Z_2$, choose a general projective space $L_2 \subset \PP^m$ of codimension $c_2 := \mathrm{codim}_{X_1}(Y_2)$ that contains $L$, and denote by $X_2$ the irreducible components away from $Y_2$ in $X_1 \cap L_2 = Y_2 \cup X_2$.
If $X_2 \cap L$ is finite, the same arguments as above show that $k \leq \deg X_2 < d$.
Otherwise,  since $\dim X_2 < \dim X_1$ and $\dim (X_2 \cap L) < \dim Y_2$, 
we can repeat the above process several times until eventually $X_i \cap L$ will be finite for some $i \in \mathbb{N}$. 
At that point we can conclude the proof as $k \leq \deg X_i < d$.
\end{proof}

\section{Generic ML-maximality} \label{sec: Teissier}

In this section we show that a general linear space of symmetric matrices is ML-maximal. This result is not new, see Remark \ref{rem:general known}. What we show here is the equivalent statement (by Theorem \ref{thm: MLdefective equals non-empty intersection}) that for a generic linear space $\mathcal{L}$, we have $\mathcal{L}^{-1}\cap \mathcal{L}^{\perp}=\{0\}$. This was conjectured in more generality in \cite[Conjecture 5.8]{MSUZ16}, and shown in even more generality to follow from a statement by Teissier in \cite[Corollary 2.6]{MMW}. However, the authors do not write down how this follows exactly, which is why we include it here. It was explained to us by Mateusz Micha\l{}ek. 

For a positive integer $n$, we denote by $I_n$ the identity matrix of rank $n$. For an $n\times n$-matrix $X$ we denote by $\adj(X)$ its adjugate; we have $X\adj(X)=\det(X)\cdot I_n$, so if $X$ is invertible we have 
\begin{equation}\label{eq:adjugate}X^{-1}=(\det(X))^{-1}\cdot \adj(X).\end{equation} 

\begin{lemma}\label{lem:teissier}
Let $V$ be a complex vector space of dimension $n$ with dual space $V^*$, and $L\subset V$ a linear subspace. Let $L^{\perp}\subset V^*$ be the space of all linear forms that vanish on $L$. Moreover, let $f$ be a homogeneous polynomial on $V$, and ${\nabla}f$ its gradient map. If $L$ is generic, we have $\overline{\PP({\nabla}f)(L)}\cap\mathbb{P}L^{\perp}=\emptyset,$ where $\overline{\PP({\nabla}f)(L)}$ is the Zariski closure of $\PP({\nabla}f)(L)$ in~$\PP V^*$.
\end{lemma}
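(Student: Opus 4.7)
The plan is to set up an incidence variety inside $\PP V \times \PP V^* \times \Gr(k, V)$, with $k = \dim L$, and prove that its projection to $\Gr(k, V)$ is not dominant via a dimension count powered by Euler's identity. The key object is the graph closure of the gradient map,
\[
\Gamma := \overline{\{(x, [\nabla f(x)]) \in \PP V \times \PP V^* \mid \nabla f(x) \neq 0\}},
\]
which is irreducible of dimension $n-1$. Since $\overline{\PP(\nabla f)(L)}$ is contained in the image of $\Gamma \cap (\PP L \times \PP V^*)$ under the second projection, the lemma reduces to showing that for generic $L$ no pair $(x, y) \in \Gamma$ satisfies $x \in \PP L$ and $y \in \PP L^\perp$. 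I therefore introduce the incidence
\[
I := \{(x, y, L) \in \Gamma \times \Gr(k, V) \mid x \in L, \ y \in L^\perp\}
\]
and aim to show $\dim I < \dim \Gr(k, V) = k(n-k)$.

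The central input is Euler's identity $\nabla f(x)(x) = (\deg f) \cdot f(x)$. For any point of $I$ the pairing $y(x)$ vanishes because $x \in L$ and $y \in L^\perp$; over the open part of $\Gamma$ where $y = [\nabla f(x)]$ this forces $f(x) = 0$, and over the indeterminacy locus of $\nabla f$ we again have $x \in V(f)$, since $V(\nabla f) \subseteq V(f)$ by Euler. Hence the first projection of $I$ lands in the subvariety
\[
\tilde\Gamma := \Gamma \cap \{(x, y) \in \PP V \times \PP V^* \mid y(x) = 0\}.
\]
Because $y(x)$ restricts to $(\deg f) \cdot f(x) \not\equiv 0$ on the dense open part of $\Gamma$, intersecting the irreducible variety $\Gamma$ with this bidegree-$(1,1)$ divisor is proper, giving $\dim \tilde\Gamma = n-2$. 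The fiber of $I \to \tilde\Gamma$ over a point $(x, y)$ parameterizes $k$-dimensional subspaces satisfying $\langle x \rangle \subseteq L \subseteq \ker y$, i.e.\ a sub-Grassmannian isomorphic to $\Gr(k-1, n-2)$ of dimension $(k-1)(n-k-1)$. Summing,
\[
\dim I \leq (n-2) + (k-1)(n-k-1) = k(n-k) - 1 < \dim \Gr(k, V),
\]
so the projection $I \to \Gr(k, V)$ is not dominant and its generic fiber is empty, yielding the lemma.

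The main delicate point will be justifying that the Euler argument handles the exceptional fibers of $\Gamma \to \PP V$ uniformly with the open part. The inclusion $V(\nabla f) \subseteq V(f)$ (itself an instance of Euler's identity) guarantees that any limit point $(x, y) \in \Gamma$ with $x$ in the indeterminacy locus of $\nabla f$ already satisfies $x \in V(f)$, so these contributions are absorbed into the bound $\dim \tilde\Gamma = n-2$ and the count goes through without a case analysis.
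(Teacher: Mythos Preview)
Your proof is correct and takes a genuinely different route from the paper's. The paper proves the lemma by invoking Teissier's idealistic Bertini theorem: after choosing coordinates so that $L = \{x_{k+1}=\cdots=x_n=0\}$, it uses that for generic $L$ each partial $\partial f/\partial x_l$ with $l>k$ is \emph{integral} over the ideal $(\partial f/\partial x_1,\ldots,\partial f/\partial x_k)$, and then derives a contradiction from a hypothetical sequence in $\PP(\nabla f)(L)$ limiting into $\PP L^\perp$ by substituting into the integral-dependence relation. Your argument bypasses this deep commutative-algebra input entirely: Euler's identity forces every pair $(x,y)\in\Gamma$ with $x\in L$, $y\in L^\perp$ to lie on the divisor $\{y(x)=0\}\cap\Gamma$, and the resulting fiber-dimension count over this $(n-2)$-dimensional locus yields $\dim I \le k(n-k)-1$, so the incidence cannot dominate $\Gr(k,V)$. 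Your approach is self-contained and arguably more transparent for this statement; what the paper's approach buys is the stronger algebraic fact about integral closure of Jacobian ideals, which is precisely the ingredient that lets the argument extend to the more general setting of \cite[Conjecture~5.8]{MSUZ16} mentioned in the surrounding text. One minor remark: your closing paragraph about the exceptional fibers of $\Gamma\to\PP V$ is not actually needed---the only fact you use is that $y(x)$ does not vanish identically on the irreducible variety $\Gamma$, which already follows from its restriction to the open graph being $(\deg f)\, f(x)\not\equiv 0$.
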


\begin{proof}The following is due to Mateusz Micha\l{}ek. 

Let $k$ be the dimension of $L$.
We may choose coordinates $\left(x_1,\ldots,x_n\right)$ on $V$ and an inner product to identify the dual space $V^*$ with $V$ such that
$L$ is defined by the equations $x_{k+1}=\cdots=x_n=0$, and $L^{\perp}$ is given by $x_1=\cdots=x_k=0$.  
Assume that $L$ is generic, and assume by contradiction that there is a sequence of elements $(X_j)_{j\geq1}$ in $\PP(\nabla f)(L)$ with limit contained in $\PP L^{\perp}$. 
Let $(Y_j)_{j\geq1}$ be a sequence in $\PP L$ such that we have 
$$(X_j)_{j\geq1}=\left(\left(\tfrac{\partial f}{\partial x_1}(Y_j) : \ldots : \tfrac{\partial f}{\partial x_n}(Y_j)\right)\right)_{j\geq1}.$$ Then $\lim_{j\rightarrow\infty} X_j\in\PP L^{\perp}$ implies $\lim_{j\rightarrow\infty}\tfrac{\partial f}{\partial x_i}(Y_j)=0$ for $i\in\{1,\ldots,k\}$, while $\lim_{j\rightarrow\infty}\tfrac{\partial f}{\partial x_l}(Y_j)\neq0$ for at least one $l\in\{k+1,\ldots,n\}$. Fix such an $l$. Since $L$ is generic, it follows from  \cite[II.2.1.3]{Teissier2} that $\tfrac{\partial f}{\partial x_l}$ is integral over the ideal $I=\left(\tfrac{\partial f}{\partial x_1},\ldots,\tfrac{\partial f}{\partial x_k}\right)$ in the ring $\mathbb{C}[x_1,\ldots,x_n]$.
By definition, the latter means that there is an integer $p$, and elements $a_i\in I^{p-i}$ for $i\in\{1,\ldots,p-1\}$, such that $$\left(\tfrac{\partial f}{\partial x_l}\right)^p+\sum_{i=0}^{p-1}a_i\left(\tfrac{\partial f}{\partial x_l}\right)^i=0.$$ 
Plugging in $Y_j$ and taking the limit, we find
$$
0=\lim_{j\rightarrow\infty}\left(\tfrac{\partial f}{\partial x_l}(Y_j)\right)^p+\lim_{j\rightarrow\infty}\sum_{i=0}^{p-1}a_i(Y_j)\left(\tfrac{\partial f}{\partial x_l}(Y_j)\right)^i=\lim_{j\rightarrow\infty}\left(\tfrac{\partial f}{\partial x_l}(Y_j)\right)^p,$$
where the last equality follows from the fact that $a_i$ is contained in $I^{p-i}$. But this contradicts our assumption that $\lim_{j\rightarrow\infty}\tfrac{\partial f}{\partial x_l}(Y_j)\neq0$.
We conclude that there is no sequence of elements in $\PP(\nabla f)(L)$ with limit contained in $\PP L^{\perp}$ if $L$ is generic. This finishes the proof.  
\end{proof}

\begin{cor}\label{cor:generic disjoin intersection}
For generic linear subspaces $\cL\subset\SSS^n$, we have $\PP\cL^{-1}\cap\PP\cL^{\perp}=\emptyset.$
\end{cor}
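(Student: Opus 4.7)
The plan is to apply Lemma \ref{lem:teissier} to the vector space $V = \SSS^n$ and the homogeneous polynomial $f = \det$ of degree $n$. The reciprocal variety $\PP \cL^{-1}$ can be recognized as the Zariski closure of the projectivized image of the gradient map $\nabla \det$ restricted to $\cL$. Indeed, by equation (\ref{eq:adjugate}), for every invertible $X$ the inverse $X^{-1}$ and the adjugate $\adj(X)$ differ only by the scalar factor $\det(X)^{-1}$, hence $\PP X^{-1} = \PP \adj(X)$ in $\PP \SSS^n$. Moreover, the gradient of $\det$ on $\SSS^n$, viewed under an appropriate identification of $(\SSS^n)^*$ with $\SSS^n$, equals $\adj$. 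Combining these observations should yield $\overline{\PP (\nabla \det)(\cL)} = \PP \cL^{-1}$.

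For the identification of $V^*$ with $V$ that figures in Lemma \ref{lem:teissier}, I will use the trace pairing $(X,Y) \mapsto \tr(XY)$, which is exactly the pairing used to define $\cL^\perp$ in the corollary; this guarantees that the polar $L^\perp$ appearing in the lemma coincides with $\cL^\perp$. A short coordinate computation is needed to confirm that the gradient of $\det$ under this identification is indeed $\adj$: for symmetric matrices, the partial derivative $\partial \det / \partial x_{ij}$ for $i < j$ acquires a factor of $2$ (because $x_{ij}$ and $x_{ji}$ are identified), while the trace pairing contributes the same factor of $2$ to the off-diagonal entries; these two factors cancel, producing $\adj(X)$ on the nose. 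On the diagonal, both sides match directly.

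Once the identifications are set, Lemma \ref{lem:teissier} directly yields $\overline{\PP (\nabla \det)(\cL)} \cap \PP \cL^\perp = \emptyset$ for generic $\cL$, which is the desired conclusion. I expect the only delicate point to be the bookkeeping around the identification of $V$ with $V^*$; once the trace pairing is used consistently and the factor-of-two cancellation is checked, the result follows immediately from (\ref{eq:adjugate}) and Lemma \ref{lem:teissier}.
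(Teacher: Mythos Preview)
Your proposal is correct and follows essentially the same route as the paper's proof: set $f=\det$ on $V=\SSS^n$, identify $\nabla f$ with $\adj$ (the paper invokes the Jacobi formula for symmetric matrices where you spell out the factor-of-two cancellation under the trace pairing), use~(\ref{eq:adjugate}) to get $\PP\cL^{-1}=\overline{\PP(\nabla f)(\cL)}$, and then apply Lemma~\ref{lem:teissier}. The only difference is that you are more explicit about the identification $V\cong V^*$ via the trace pairing, which the paper leaves implicit.
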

\begin{proof}
Let $f$ be the map $f\colon\mathbb{S}^n\longrightarrow\mathbb{C}$ given by $X\longmapsto \det(X).$ By the Jacobi formula and the fact that we work with symmetric matrices, for $X\in\mathbb{S}^n$ we have $({\nabla}f)(X)=\adj(X)\in\SSS^n$. From this and (\ref{eq:adjugate}) it follows that for a linear subspace in $\SSS^n$ we have $$\PP\mathcal{L}^{-1}\cap\mathbb{P}\mathcal{L}^{\perp}=\overline{\mathbb{P}({\nabla}f)(\mathcal{L})}\cap\mathbb{P}\mathcal{L}^{\perp},$$ 
where $\overline{\mathbb{P}({\nabla}f)(\mathcal{L})}$ is the Zariski closure of $\mathbb{P}({\nabla}f)(\mathcal{L})$ in $\mathbb{P}\mathbb{S}^n$. 
The statement now follows from Lemma \ref{lem:teissier}.
\end{proof}

\begin{rem}As we will show in Lemma \ref{lem:open set contained in A}, for the determinant map $f$ as in Corollary \ref{cor:generic disjoin intersection} the image $\PP({\nabla}f)(\mathcal{L})$ is disjoint from $\PP\mathcal{L}^{\perp}$ if and only if $\cL$ is not contained in the hyperplane tangent to the zero locus $Z(f)$ of $f$ at a smooth point belonging $\cL$ (this also follows from the more general statement for any \textsl{hyperbolic} polynomial in \cite[Proposition 5.9]{MSUZ16}). But this is true for generic $\cL$ by Bertini's Theorem \cite[Theorem 17.16]{Harris}. 
The added value of Lemma~\ref{lem:teissier} is thus to show that the \textsl{closure} of $\mathbb{P}({\nabla}f)(\mathcal{L})$ is disjoint from $\PP\cL^{\perp}$ for generic~$\cL$.
As we saw in the proof of the lemma, where Bertini shows that the radical of the ideal of the singular locus  Sing($Z(f)\cap\cL$) of $Z(f)\cap\cL$ equals the radical of the ideal of Sing$(Z(f))\cap \cL$, Teissier shows that these two ideals are in fact \textsl{integral} over each other. There are several versions of Teissier's `Th\'{e}or\`{e}me Bertini id\'{e}aliste'; see  \cite[Proposition 2.7]{Teissier1}, \cite[p.42]{Teissier3},  \cite[Theorem 2.6]{Gaffney}. \end{rem}

\section{Sufficient and necessary conditions for ML-maximality}
Let $k$ and $n$ be two integers. By $\NM^\circ_{k,n}$ we denote the set in Gr$(k,\SSS^n)$ of $k$-dimensional linear subspaces in $\SSS^n$ that are not ML-maximal. Note that NM$_{k,n}$ is its Zariski closure by definition, and by Theorem \ref{thm: MLdefective equals non-empty intersection}, we have 
$$\NM^\circ_{k,n}= \{\mathcal{L}\in \mbox{Gr}(k,\mathbb{S}^n) \mid \PP\mathcal{L}^{-1}\cap\mathbb{P}\mathcal{L}^{\perp}\neq\emptyset\}.$$ 
In this section we show that $\NM^\circ_{k,n}$ is neither Zariski open nor closed (Remark \ref{rem:notOpenNotClosed}), and we provide 
sufficient and necessary conditions for a linear space to be ML-maximal (Remark \ref{rem:ness+suff}).
More specifically, we describe a subset of $\NM^\circ_{k,n}$ in terms of tangency to the determinantal hypersurface (Lemma \ref{lem:open set contained in A}), and we show that $\NM^\circ_{k,n}$ is contained in the closed set 
\begin{align}
\label{eq:Ckn}
    C_{k,n}:=\{\mathcal{L}\in \mbox{Gr}(k,\mathbb{S}^n)\mid\exists \;(X,Y)\in \mathbb{P}\mathcal{L}\times\mathbb{P}\mathcal{L}^{\perp}\colon X Y=0\}
\end{align}
(Corollary \ref{cor:AcontainedinC}). The latter is
one of the main ingredients in the proof of Theorem~\ref{thm:nonMaximalEqualsBad} (see Section \ref{sec:zariskiClosure}).
The set $C_{k,n}$ is the union of the coisotropic hypersurfaces in $\Gr(k, \SSS^n)$ associated to the determinantal variety $D_s$, where $s$ ranges over the integers such that $\binom{n-s+1}{2} \leq k \leq \binom{n+1}{2} - \binom{s+1}{2}$~\cite[Theorems~11 and~17]{JS}.

\begin{rem}\label{rem:notOpenNotClosed}
The set $\NM^\circ_{k,n}$ is in general neither Zariski open nor closed. To illustrate this, 
we consider the stratification of the Grassmannian $\Gr(2, \SSS^n)$ in terms of Segre symbols described in~\cite[Section~5]{FMS}.
In \cite[Example 1.3]{FMS}, we see that the ML-maximal elements of $\Gr(2,\mathbb{S}^3)$ lie in the strata of $\Gr(2,\mathbb{S}^3)$ defined by Segre symbols with only 1's. 
In other words, the complement of $\NM^\circ_{2,3}$ is the union of the two strata $\Gr_{[1,1,1]}$ and $\Gr_{[(1,1),1]}$. 
However, in Figure~1 of the same paper, we find the following inclusions of Zariski closures of strata of codimensions $2$, $1$ and $0$ in $\Gr(2, \SSS^3)$:
$$\overline{\Gr_{[(1,1),1]}} \subset \overline{\Gr_{[2,1]}} \subset \overline{\Gr_{[1,1,1]}} =  \Gr(2,\mathbb{S}^3).$$ 
We conclude that the complement of $\NM^\circ_{2,3}$ in $\Gr(2,\SSS^3)$ is neither Zariski open nor closed, hence neither is $\NM^\circ_{2,3}$. 
By \cite[Example 3.1]{FMS},
the ML-maximal elements of $\Gr(2, \SSS^4)$ lie again in the strata with Segre symbols containing only 1's.
The same argument shows that $\NM^\circ_{2,4}$ is neither Zariski open nor closed.
\end{rem}



\begin{lemma}
\label{lem:notFullRankIntersection}
Let $\cL \subset \SSS^n$ be a regular linear space.
The intersection $\cL^{-1} \cap \cL^\perp$ does not contain matrices of full rank.
\end{lemma}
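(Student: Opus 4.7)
The plan is to reduce the claim to a one-line trace computation, once one establishes that the Zariski closure operation defining $\cL^{-1}$ introduces no new full-rank matrices. In other words, the full-rank stratum of $\cL^{-1}$ agrees with the naive set $\{X^{-1} : X \in \cL, \rank(X) = n\}$.

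First I would identify $\cL^{-1} \cap W$ explicitly, where $W \subset \SSS^n$ denotes the Zariski open locus of invertible symmetric matrices. Inversion restricts to a regular involution $\iota \colon W \to W$, $X \mapsto X^{-1}$. Since $\cL \cap W$ is relatively closed in $W$ (as $\cL$ is closed in $\SSS^n$), its image under the automorphism $\iota$ is again closed in $W$. But this image is exactly the set whose Zariski closure in $\SSS^n$ defines $\cL^{-1}$. Intersecting back with $W$ yields
$$\cL^{-1} \cap W = \{X^{-1} : X \in \cL, \; \rank(X) = n\}.$$
So every full-rank element of $\cL^{-1}$ is genuinely the inverse of a full-rank matrix in $\cL$; regularity of $\cL$ enters only to ensure that this set (and hence $\cL^{-1}$) is nonempty.

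Once this is in hand, I would conclude in one step. Suppose for contradiction that $Y \in \cL^{-1} \cap \cL^\perp$ has full rank. Write $Y = X^{-1}$ for some $X \in \cL$ with $\rank(X) = n$. Then $\tr(XY) = \tr(I_n) = n \neq 0$, which contradicts $Y \in \cL^\perp$, since the latter forces $\tr(XY) = 0$ for every $X \in \cL$.

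The only mildly delicate point is the first step, namely confirming that Zariski closure in $\SSS^n$ does not enlarge the full-rank part of $\cL^{-1}$; this is the reason to carry out the argument on the open subset $W$ and invoke the fact that $\iota$ is an isomorphism of $W$ rather than merely of $\SSS^n$. Everything else is a formal trace identity.
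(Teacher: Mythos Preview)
Your proof is correct and follows essentially the same approach as the paper's: both argue that a full-rank $Y \in \cL^{-1}$ must satisfy $Y^{-1} \in \cL$, and then derive the contradiction $0 = \tr(Y^{-1}Y) = n$. The paper simply asserts the implication $Y \in \cL^{-1} \cap W \Rightarrow Y^{-1} \in \cL$ without comment, whereas you spell out why the Zariski closure adds no new invertible matrices via the involution $\iota$ on $W$; this extra care is justified but does not change the underlying argument.
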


\begin{proof}
If the intersection would contain a full-rank matrix $Y$, then $Y^{-1} \in \cL$ and $Y \in \cL^\perp$, which yields $0 = \tr(Y^{-1}Y) = \tr(I_n) = n > 0$; a contradiction.
\end{proof}

For a linear space $\cL$ in $\SSS^n$, we denote by $\PP\mathring{\mathcal{L}}^{-1}$ the open subset of $\PP\cL^{-1}$ given by the adjugates of all matrices in $\PP\cL$ of rank at least $n-1$. 
We now describe the following  subset of $\NM^\circ_{k,n}$: \begin{equation}\label{eq:open set}\{\mathcal{L}\in \mbox{Gr}(k,\mathbb{S}^n) \mid \PP\mathring{\mathcal{L}}^{-1}\cap\mathbb{P}\mathcal{L}^{\perp}\neq\emptyset\}.\end{equation}
The next lemma also follows from the more general \cite[Proposition 5.9]{MSUZ16}.
\begin{lemma}\label{lem:open set contained in A}
The  subset of $\NM^\circ_{k,n}$ given by (\ref{eq:open set}) is equal to 
$$\{\mathcal{L}\in \Gr(k,\mathbb{S}^n) \mid\exists\;X\in\mathrm{Reg}(D_{n-1})\colon X \in  \mathcal{L}\subset T_X(D_{n-1})\}.$$
\end{lemma}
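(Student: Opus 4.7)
The plan is to prove the equality of the two sets by showing both inclusions, with the central observation being that, via Jacobi's formula, the tangent hyperplane to the determinantal hypersurface $D_{n-1}$ at a rank-$(n-1)$ matrix $X$ coincides with the trace-annihilator of $\adj(X)$.

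First, I would recall that $D_{n-1} \subset \SSS^n$ is the hypersurface cut out by $\det$, that $\mathrm{Reg}(D_{n-1})$ consists exactly of the matrices of rank $n-1$ (the singular locus being $D_{n-2}$), and that by the computation already used in the proof of Corollary~\ref{cor:generic disjoin intersection} (Jacobi's formula together with the symmetry of the matrices) the gradient of $\det$ at $X$ is $\adj(X) \in \SSS^n$. Consequently $T_X(D_{n-1}) = \{Y \in \SSS^n : \tr(\adj(X) Y) = 0\}$ for every $X \in \mathrm{Reg}(D_{n-1})$. This gives the key dictionary: $\cL \subset T_X(D_{n-1})$ if and only if $\adj(X) \in \cL^\perp$.

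For the inclusion from right to left, given $X \in \mathrm{Reg}(D_{n-1})$ with $X \in \cL \subset T_X(D_{n-1})$, I would note that $\adj(X) \neq 0$ (it has rank one when $\rank(X) = n-1$), so it represents a point of $\PP\mathring{\cL}^{-1}$, while the tangent-space containment is the statement $\adj(X) \in \cL^\perp$. For the reverse inclusion, given $[Z] \in \PP\mathring{\cL}^{-1} \cap \PP\cL^\perp$, I would unwind the definition of $\mathring{\cL}^{-1}$ to write $Z = \adj(X)$ for some $X \in \cL$ with $\rank(X) \geq n-1$. The case $\rank(X) = n$ can be excluded by invoking Lemma~\ref{lem:notFullRankIntersection}: a full-rank $X$ would produce a full-rank $\adj(X) = \det(X)\cdot X^{-1}$ lying in $\cL^{-1} \cap \cL^\perp$, contradicting that lemma. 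Once we know $\rank(X) = n-1$, i.e., $X \in \mathrm{Reg}(D_{n-1})$, the condition $\adj(X) \in \cL^\perp$ translates back through the dictionary above to $\cL \subset T_X(D_{n-1})$.

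The main obstacle is really just the careful bookkeeping between affine and projective representatives and between a matrix and its adjugate; there is no deep geometric input required. The only nontrivial step is ruling out the full-rank case in the reverse inclusion, and Lemma~\ref{lem:notFullRankIntersection} is precisely tailored for that purpose. The rest is a direct translation using Jacobi's formula.
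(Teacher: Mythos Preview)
Your proposal is correct and follows essentially the same approach as the paper: both arguments hinge on the Jacobi-formula identification of $T_X(D_{n-1})$ with the trace-annihilator of $\adj(X)$, and both invoke Lemma~\ref{lem:notFullRankIntersection} to exclude the full-rank case when passing from a point of $\PP\mathring{\cL}^{-1}\cap\PP\cL^{\perp}$ back to a smooth point of $D_{n-1}$. The only difference is expository---you spell out the dictionary $\cL\subset T_X(D_{n-1})\Leftrightarrow\adj(X)\in\cL^{\perp}$ more explicitly---but the mathematical content is the same.
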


\begin{proof} If $\mathcal{L}$ is contained in $(\ref{eq:open set})$, then there is a matrix $X$ in $\PP\mathcal{L}$ of rank at least $n-1$, whose adjugate is contained in $\mathbb{P}\mathcal{L}^{\perp}$. 
By Lemma~\ref{lem:notFullRankIntersection}, $X$ has rank $n-1$, hence it is a regular point of $D_{n-1}$. Since $\adj(X)$ is in the annihilator of the tangent hyperplane $T_X(D_{n-1})$, and $\adj(X)$ is contained in $\mathbb{P}\mathcal{L}^{\perp}$, it follows that $\mathcal{L}$ is a subset of $T_X(D_{n-1})$. Conversely, let $X$ be a regular point of $D_{n-1}$ such that $\mathcal{L}$ is tangent to $D_{n-1}$ at $X$.
Then $X$ has rank $n-1$, so $\adj(X)$ is contained in $\PP\mathring{\cL}^{-1}$, and it is in the annihilator of $T_X(D_{n-1})$, so it is contained in $\mathbb{P}\mathcal{L}^{\perp}$. 
\end{proof}

\begin{rem}
\label{rem:smallCoisotropicVariety}
Lemma~\ref{lem:open set contained in A} says that~\eqref{eq:open set} is exactly the set of $k$-dimensional tangent spaces at smooth points of the determinantal hypersurface $D_{n-1}$. 
Thus, its Zariski closure is the irreducible coisotropic variety Ch$_{k-1}(D_{n-1})$. \cite{Ko}
\end{rem}

\begin{exa}
The following linear space is an element in $\NM^\circ_{3,3}$ that is not contained in its subset~\eqref{eq:open set}. 
Let $\cL$ be spanned by $\left[\begin{smallmatrix}
1&0&0\\
0&0&0\\
0&0&0
\end{smallmatrix}\right], \left[\begin{smallmatrix}
0&0&1\\
0&1&0\\
1&0&0
\end{smallmatrix}\right],\left[\begin{smallmatrix}
0&0&0\\
0&0&1\\
0&1&0
\end{smallmatrix}\right]$.
The intersection $\PP\mathcal{L}^{-1} \cap \mathbb{P}\mathcal{L}^{\perp}$ consists of the single element $M=\left[\begin{smallmatrix}
0&0&0\\
0&0&0\\
0&0&1
\end{smallmatrix}\right]$, so $\mathcal{L}$ is contained in $\NM^\circ_{3,3}$. However, $\PP\mathring{\mathcal{L}}^{-1}\cap\PP\mathcal{L}^{\perp}$ is empty, since $M$ is not the adjugate of any matrix in $\mathcal{L}$. 

The linear space $\cL$ is a net of conics of Type~C according to Wall's classification~\cite{wall}.
This means that it is a generic point in the intersection of the Chow hypersurface $\mathrm{Ch}_0(D_1)$ of the rank-one locus $D_1$ with the Zariski closure $\mathrm{Ch}_2(D_2)$ of \eqref{eq:open set} described in Remark~\ref{rem:smallCoisotropicVariety}.
By duality, $\cL \in \mathrm{Ch}_2(D_2)$ if and only if $\cL^\perp \in \mathrm{Ch}_0(D_1)$.
Hence $\cL$ is generic among all linear spaces with the property that both $\cL$ and $\cL^\perp$ contain a rank-one matrix.
We also see from \cite[Table 1]{MLEnetsOfConics} that ML-degree$(\cL) = 2 < 3 = \deg(\PP \cL^{-1})$, so $\cL$ is not ML-maximal.
\end{exa}

We end this section by showing that $\NM^\circ_{k,n}$ is contained in the set $C_{k,n}$  defined in~\eqref{eq:Ckn}.
For any $\mathcal{L}\in G(k,\mathbb{S}^n)$, consider the Zariski closed set
$$C_{\mathcal{L}}=\{(X, Y)\in \mathbb{P}\mathcal{L} \times \mathbb{P}\mathbb{S}^n \mid X Y = t\cdot I_n\mbox{ for some }t\in\mathbb{C}\}$$
with the projection $\pi_\mathcal{L}: C_\mathcal{L} \to \mathbb{S}^n$ to the second coordinate. 

\begin{lemma}\label{lem:AcontainedinC}
Let $\cL \subset \SSS^n$ be a regular linear space.
For every $Y \in \cL^{-1}$ with $\rank(Y) < n$, there is an $X \in \cL,\; X \neq 0$, such that $XY = 0$.
\end{lemma}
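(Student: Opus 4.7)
The plan is to prove the lemma by a compactness-and-rescaling argument: I would realize the required $X$ as a projective limit of the full-rank matrices whose inverses approach $Y$. By definition, $\cL^{-1}$ is the Zariski closure in $\SSS^n$ of the constructible set $\{W^{-1} \mid W \in \cL,\ \rank(W) = n\}$; since for constructible subsets of a complex variety the Zariski and Euclidean closures agree, I can pick a sequence $(W_i)_{i \geq 1}$ of full-rank matrices in $\cL$ with $W_i^{-1} \to Y$ in the Euclidean topology.

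Next, I would argue that $(\|W_i\|)_{i\ge 1}$ must be unbounded. If not, after passing to a subsequence we would have $W_i \to W$ for some $W \in \cL$, and taking the limit in the identity $W_i \cdot W_i^{-1} = I_n$ would give $WY = I_n$, forcing $Y$ to be invertible and contradicting the assumption $\rank(Y) < n$.

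Knowing then that $\|W_i\| \to \infty$ (along a subsequence), I would set $X_i := W_i / \|W_i\|$, so that the $X_i$ lie on the unit sphere of $\cL$. By compactness, a further subsequence satisfies $X_i \to X$ for some $X \in \cL$ with $\|X\| = 1$, in particular $X \neq 0$. Rescaling the relation $W_i \cdot W_i^{-1} = I_n$ by $1/\|W_i\|$ gives $X_i \cdot W_i^{-1} = I_n / \|W_i\|$, and letting $i \to \infty$ the left-hand side tends to $XY$ while the right-hand side tends to $0$. Thus $XY = 0$, as required.

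There is no deep obstacle here; the argument is a routine normalization-and-limit trick. The only point requiring care is choosing the rescaling so that the limit $X$ is guaranteed to be nonzero while the product $X_i \cdot W_i^{-1}$ still converges to $0$, which is ensured by dividing $W_i$ by its norm. The passage from the algebraic (Zariski) closure to Euclidean limits is standard and does not present any real difficulty.
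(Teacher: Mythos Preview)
Your argument is correct. The key steps---passing from Zariski to Euclidean closure via constructibility, forcing $\|W_i\|\to\infty$ by the rank hypothesis, and normalizing to extract a nonzero limit $X$ with $XY=0$---are all valid.

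The paper takes a different, purely algebro-geometric route. It introduces the closed incidence variety
\[
C_{\cL}=\{(X,Y)\in\PP\cL\times\PP\SSS^n\mid XY=t\cdot I_n\text{ for some }t\in\mathbb{C}\}
\]
and observes that, since $\PP\cL$ is projective, the projection to the second factor has Zariski-closed image; this image contains every $Z^{-1}$ with $Z\in\cL$ of full rank, hence contains $\cL^{-1}$. So any $Y\in\cL^{-1}$ lifts to some $(X,Y)\in C_\cL$, and $\rank(Y)<n$ forces $t=0$. Your proof is the analytic shadow of this: compactness of the unit sphere in $\cL$ plays the role of properness of $\PP\cL$, and your rescaling by $1/\|W_i\|$ is the passage to projective coordinates. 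The paper's version is cleaner and works over any algebraically closed field, while yours is more elementary and self-contained for readers less comfortable with properness of projective morphisms.
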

\begin{proof}
For any $Z \in \mathbb{P}\mathcal{L} $ of full rank, we have $(Z, Z^{-1}) \in C_\mathcal{L}$, so $Z^{-1} \in \pi_{\mathcal{L}} C_\mathcal{L}$. Since $C_{\mathcal{L}}$ is a projective variety, $\pi_\mathcal{L}C_\mathcal{L}$ is closed, so $\mathcal{L}^{-1}$ is contained in $\pi_{\mathcal{L}} C_\mathcal{L}$.
Therefore, if $Y$ is in $\mathcal{L}^{-1}$, then $Y$ is in the image of $\pi_{\cL}$. So there is an $X\in\mathcal{L}$ with $XY=t\cdot I_n$ for some constant $t$. If $Y$ is not of full rank, this implies $XY=0$. 
\end{proof}

\begin{cor}
\label{cor:AcontainedinC}
The set $\NM^\circ_{k,n}$ is contained in $C_{k,n}$. 
\end{cor}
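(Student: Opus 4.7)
The plan is to directly combine the two preceding lemmas. Take any $\cL \in \NM^\circ_{k,n}$. By definition (and Theorem~\ref{thm: MLdefective equals non-empty intersection}), the linear space $\cL$ is regular and the intersection $\cL^{-1} \cap \cL^\perp$ contains a nonzero element~$Y$; equivalently, $\PP\cL^{-1} \cap \PP\cL^\perp \neq \emptyset$.

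First I would apply Lemma~\ref{lem:notFullRankIntersection} to conclude that $\rank(Y) < n$, so $Y$ cannot be full-rank. Next, since $Y \in \cL^{-1}$ is a rank-deficient matrix, I would apply Lemma~\ref{lem:AcontainedinC} to produce a nonzero $X \in \cL$ satisfying $XY = 0$. Since $Y$ is already in $\cL^\perp$, the pair $(X,Y) \in \PP\cL \times \PP\cL^\perp$ witnesses that $\cL \in C_{k,n}$.

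The argument is essentially a one-step combination, so there is no genuine obstacle; the only point that warrants a brief check is that $\NM^\circ_{k,n}$ contains only regular linear spaces, which is needed to invoke Lemma~\ref{lem:AcontainedinC}. This is immediate from the fact that $\cL^{-1}$ is only defined for regular~$\cL$, so the condition $\PP\cL^{-1} \cap \PP\cL^\perp \neq \emptyset$ presupposes regularity.
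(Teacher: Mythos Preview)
Your proposal is correct and follows essentially the same approach as the paper's own proof: pick a nonzero $Y \in \cL^{-1} \cap \cL^\perp$, use Lemma~\ref{lem:notFullRankIntersection} to see that $Y$ is rank-deficient, then apply Lemma~\ref{lem:AcontainedinC} to obtain a nonzero $X \in \cL$ with $XY=0$. Your explicit remark about regularity being implicit in the definition of $\NM^\circ_{k,n}$ is a nice clarification that the paper leaves tacit.
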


\begin{proof}
Let $\cL \in \NM^\circ_{k,n}$ be a regular subspace of $\SSS^n$.
By definition, there is a non-zero matrix $Y$ in the intersection $ \cL^{-1} \cap \cL^\perp$ which has rank $<n$ by Lemma \ref{lem:notFullRankIntersection}. Lemma \ref{lem:AcontainedinC} now guarantees the existence of a non-zero matrix $X \in \cL$ such that $XY=0$, which shows that $\cL$ is contained in $C_{k,n}.$ 
\end{proof}

\begin{rem}
\label{rem:involution}
The Zariski closure $\NM_{k,n}$ of $\NM^\circ_{k,n}$ is in general not equal to $C_{k,n}$, as can be seen by the following argument:
under the involution $\mathcal{L}\mapsto\mathcal{L}^{\perp}$, the set $C_{k,n}$ gets mapped to the set $C_{\binom{n+1}{2}-k,n}$, but $\NM_{k,n}$ is in general not mapped to $\NM_{\binom{n+1}{2}-k,n}$ as Example~\ref{exa:NMfailPolarity} illustrates.

In fact, by Theorem~\ref{thm:nonMaximalEqualsBad} and \cite[Theorem 17]{JS}, we have the following:
if there is an integer $s$ such that $k = \binom{n-s+1}{2}$, then $C_{k,n}$ is the union of $\NM_{k,n}$ and the Chow hypersurface $\mathrm{Ch}_0(D_s)$ of $D_s$; 
otherwise, $C_{k,n} = \NM_{k,n}$. 
\end{rem}

\begin{exa}\label{exa:NMfailPolarity}
The integers $k=3, n=3, s=1$ satisfy $k = \binom{n-s+1}{2}$.
The linear space $\cL$  in $\SSS^3$ spanned by diagonal matrices is contained in $C_{3,3}$ but not in $\NM_{3,3}$. 
According to Wall's classification~\cite{wall}, $\cL$ is a net of conics of Type $E$.
Its projectivization is a trisecant plane of the rank-one locus $D_1$, so in particular $\cL$ is in the Chow hypersurface $\mathrm{Ch}_0(D_1)$.
However, $\cL$ is not contained in the coisotropic hypersurface $\mathrm{Ch}_2(D_2)$, which is by Corollary~\ref{cor:coisotropicUnion} equal to $\NM_{3,3}$.

On the other hand, its polar net $\cL^\perp$ is of type $E^*$, so by \cite[Table 1]{MLEnetsOfConics} we have ML-degree$(\cL^\perp) = 1 < 4 = \deg(\PP (\cL^\perp)^{-1})$. Hence the polar net $\cL^\perp$ is not ML-maximal, i.e. $\cL^\perp \in  \NM^\circ_{3,3}$.
Corollary~\ref{cor:AcontainedinC} and Remark~\ref{rem:involution} imply that both $\cL^\perp$ and $\cL$ are contained in $C_{3,3}$.
\end{exa}

\begin{rem}\label{rem:ness+suff}
Lemma \ref{lem:open set contained in A} gives a sufficient condition, and Corollary \ref{cor:AcontainedinC} gives a necessary condition for a linear subspace not to be ML-maximal.
On the one hand, every linear space that is tangent to the manifold of corank-one matrices is not ML-maximal.
On the other hand, for every linear space $\cL$ that is not ML-maximal, there are non-zero matrices $X \in \cL$ and $Y \in \cL^\perp$ such that $XY=0$.
\end{rem}

\section{Proof of Theorem \ref{thm:nonMaximalEqualsBad}}
\label{sec:zariskiClosure}
Recall that $\Bad_{k,n}$ is the Zariski closure in $\Gr(k,\SSS^n)$ of the set of $k$-dimensional bad subspaces of $\SSS_{\mathbb{R}}^n$ as defined in the introduction.
In this section, we prove that $\NM_{k,n}$ equals $\Bad_{k,n}$. We start with one inclusion.

\begin{prop}
\label{prop:badContainedInMLdefective}
$\mathrm{Bad}_{k,n}$ is contained in
$\NM_{k,n}$.
\end{prop}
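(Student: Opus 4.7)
The plan is to reduce the containment to showing that every real bad $k$-dimensional subspace $\cL \subset \SSS^n_{\mathbb{R}}$ already belongs to $\NM_{k,n}$. Since $\NM_{k,n}$ is Zariski closed by definition and $\Bad_{k,n}$ is by definition the Zariski closure of the set of real bad subspaces, this pointwise containment will imply the containment of Zariski closures. For a real bad $\cL$, I would invoke a Pataki-style characterization (as in \cite{pataki,pataki2019characterizing}) to produce a pair of non-zero positive semidefinite matrices $X \in \cL$ of some rank $s$ and $Y \in \cL^\perp$ with $XY = 0$, together with enough structural information to conclude that $\cL$ is contained in the tangent space $T_X(D_s)$ to the determinantal variety $D_s$ at the smooth point $X$.

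To exhibit a non-zero element of $\cL^{-1} \cap \cL^\perp$, which by Theorem~\ref{thm: MLdefective equals non-empty intersection} places $\cL$ in $\NM^\circ_{k,n}$, I would work in coordinates in which $X = \left(\begin{smallmatrix} \Lambda & 0 \\ 0 & 0 \end{smallmatrix}\right)$ with an invertible $s \times s$ block $\Lambda$, and consider a full-rank $X_0 \in \cL$ (which exists by regularity of $\cL$; non-regular $\cL$ is handled separately, see below). A block-matrix expansion of $(X + tX_0)^{-1}$ as $t \to 0$ shows that its projective limit is a non-zero symmetric matrix supported on $\ker X$, i.e., an element of the conormal space $T_X(D_s)^\perp$. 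Because $\cL \subseteq T_X(D_s)$, this conormal space is contained in $\cL^\perp$, and so the limit provides the desired non-zero element of $\cL^{-1} \cap \cL^\perp$. This extends the argument of Lemma~\ref{lem:open set contained in A}, which addresses the special case $s = n-1$, to arbitrary ranks.

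The principal obstacle is verifying that the tangency $\cL \subseteq T_X(D_s)$ actually follows from a single Pataki pair: one matrix $Y \in \cL^\perp$ supported on $\ker X$ only cuts out a proper subspace of the $\binom{n-s+1}{2}$-dimensional conormal space to $D_s$ at $X$, so I would need either the refined version of Pataki's theorem producing a full family of such pairs, or to combine the argument with the description of $\Bad_{k,n}$ as a union of coisotropic hypersurfaces from \cite[Theorem 11]{JS}. In the latter route the proposition reduces to showing that each such coisotropic hypersurface is contained in $\NM_{k,n}$, which is precisely what the block-matrix limit argument above accomplishes for the generic tangency points of the hypersurface. Finally, non-regular $\cL \in \Bad_{k,n}$ can be approximated by regular bad subspaces in the same coisotropic hypersurface, so the Zariski closedness of $\NM_{k,n}$ covers these automatically.
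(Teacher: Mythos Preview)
Your proposal has a genuine gap, and it is precisely at the point you flag as the ``principal obstacle'' but then resolve incorrectly.

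A generic point $\cL$ on a coisotropic hypersurface associated with $D_s$ satisfies only the non-transversality condition $\cL + T_X D_s \neq \SSS^n$ at some smooth $X \in \cL \cap D_s$, i.e.\ there exists \emph{one} non-zero $Y \in \cL^\perp$ with $XY=0$. It does \emph{not} satisfy the much stronger inclusion $\cL \subseteq T_X(D_s)$ unless $s=n-1$. Your block-matrix limit argument therefore does not place the limit in $\cL^\perp$. Concretely, with $X = \left(\begin{smallmatrix}\Lambda & 0 \\ 0 & 0\end{smallmatrix}\right)$ and $X_0 = \left(\begin{smallmatrix}A & B \\ B^\top & C\end{smallmatrix}\right) \in \cL$ with $C$ invertible, the projective limit of $(X+tX_0)^{-1}$ as $t \to 0$ is $Z = \left(\begin{smallmatrix}0 & 0 \\ 0 & C^{-1}\end{smallmatrix}\right)$, and one computes $\tr(Z X_0) = \tr(C^{-1}C) = n-s \neq 0$. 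So $Z \notin \cL^\perp$, and your argument collapses exactly here. This is essentially the same obstruction as in Lemma~\ref{lem:notFullRankIntersection}: a one-parameter limit of inverses of matrices in $\cL$ can never land in $\cL^\perp$.

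The paper circumvents this by reversing the roles: rather than producing some $Z \in \cL^{-1}$ supported on $\ker X$ and hoping it lies in $\cL^\perp$, it shows (Lemma~\ref{lem:pointsInLinvernse}) that for a \emph{general} $\cL \in \mathcal{G}_{X,Y}$ the \emph{entire} conormal space $\{Z : XZ = 0\}$ is contained in $\cL^{-1}$, so that in particular the specific $Y \in \cL^\perp$ already provided by non-transversality lies in $\cL^{-1} \cap \cL^\perp$. Proving this requires more than a single one-parameter family: one needs a two-stage perturbation analysis (first with constant $b_i$, then with $b_0$ constant and the remaining $b_i$ linear in $\varepsilon$) to see that the total transform $\Gamma_\cL(X)$ fills out all of $\{Z : XZ=0\}$ rather than just the reciprocal hypersurface $(I_{n-s}^\perp)^{-1}$ inside it. The assumption $k > \binom{n-s+1}{2}$ from the Pataki range is used here to guarantee the existence of a suitable basis element $B_0$ with vanishing lower-right block, which is what makes the second perturbation escape $(I_{n-s}^\perp)^{-1}$.
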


\noindent
To show the proposition, we outsource all the hard work to the following lemma.

\begin{lemma}
\label{lem:pointsInLinvernse}
Let $s$ be an integer with $0 < s < n$ and $k > \binom{n-s+1}{2}$.
For fixed   $X,Y \in \mathbb{S}^n$ with $\rank(X)=s$, $\rank(Y) = n-s$ and $XY= 0$, we consider the variety
\begin{align*}
    \mathcal{G}_{X,Y} := \left\lbrace \mathcal{L} \in \mathrm{Gr}(k,\mathbb{S}^n) \mid X \in \mathcal{L}, Y \in \mathcal{L}^\perp \right\rbrace.
\end{align*}
A general $\mathcal{L}$ in $\mathcal{G}_{X,Y}$
satisfies
\begin{align*}
    \left\lbrace Z \in \mathbb{S}^n \mid  XZ=0 \right\rbrace \subseteq \mathcal{L}^{-1};
\end{align*} 
in particular, we have that $Y$ is contained in $\mathcal{L}^{-1}$.
\end{lemma}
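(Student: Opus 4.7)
After a suitable change of basis, I may assume $X = \left(\begin{smallmatrix} I_s & 0 \\ 0 & 0\end{smallmatrix}\right)$ and $Y = \left(\begin{smallmatrix} 0 & 0 \\ 0 & Y_2\end{smallmatrix}\right)$ with $Y_2 \in \SSS^{n-s}$ invertible. With this normalization, $V := \{Z : XZ = 0\}$ is identified with $\SSS^{n-s}$ via $W \mapsto \left(\begin{smallmatrix} 0 & 0 \\ 0 & W\end{smallmatrix}\right)$, and the conditions defining $\mathcal{G}_{X,Y}$ become $X \in \mathcal{L} \subseteq Y^\perp = \{M : \tr(Y_2 M_{22}) = 0\}$, where $M_{22}$ denotes the bottom-right $(n-s)\times(n-s)$ block of $M$. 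The plan is to show, for each invertible $W \in \SSS^{n-s}$ and generic $\mathcal{L}$, that there is a curve $M_\epsilon = X + \epsilon A + \epsilon^2 B$ in $\mathcal{L}$ such that $\epsilon^2 M_\epsilon^{-1}$ tends to $\left(\begin{smallmatrix} 0 & 0 \\ 0 & W\end{smallmatrix}\right)$ as $\epsilon \to 0$; since $\mathcal{L}^{-1}$ is Zariski closed and the invertible $W$ are dense in $\SSS^{n-s}$, this yields $V \subseteq \mathcal{L}^{-1}$.

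The matrices $A, B \in \mathcal{L}$ will be chosen so that $A_{22} = 0$ and $B_{22} = W^{-1} + A_{12}^T A_{12}$. A Schur-complement computation then shows that the Schur complement of the top-left block of $M_\epsilon$ equals $\epsilon^2(B_{22} - A_{12}^T A_{12}) + O(\epsilon^3) = \epsilon^2 W^{-1} + O(\epsilon^3)$. Inverting block-wise and multiplying by $\epsilon^2$ kills every block of $M_\epsilon^{-1}$ in the limit $\epsilon \to 0$ except the bottom-right one, which limits to $W$. The matrix $M_\epsilon$ is invertible for all small nonzero $\epsilon$, so the limit indeed lies in $\mathcal{L}^{-1}$.

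The hard part will be to verify that such $A$ and $B$ exist in a generic $\mathcal{L} \in \mathcal{G}_{X,Y}$. For $B$, the key fact is that the projection $\pi : \mathcal{L} \to \SSS^{n-s}$, $M \mapsto M_{22}$, surjects onto the hyperplane $\{M_{22} : \tr(Y_2 M_{22}) = 0\}$ for generic $\mathcal{L}$; so the prescribed $B_{22}$ lies in $\pi(\mathcal{L})$ precisely when $\tr(Y_2 A_{12}^T A_{12}) = -\tr(Y_2 W^{-1})$. Thus the problem reduces to finding $A \in \mathcal{L}$ with $A_{22} = 0$ at which the quadratic form $Q(U) := \tr(Y_2 U^T U)$ on $\mathbb{C}^{s \times (n-s)}$ takes an arbitrary prescribed value. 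The hypothesis $k > \binom{n-s+1}{2}$ ensures by a dimension count that $\mathcal{L} \cap \{M_{22} = 0\}$ has dimension at least $2$ for generic $\mathcal{L}$, so its projection to the $(1,2)$-block is a positive-dimensional subspace of $\mathbb{C}^{s \times (n-s)}$. The form $Q$ is nondegenerate (as $Y_2$ is invertible), and for generic $\mathcal{L}$ it does not vanish identically on this image (an open condition made non-empty by an explicit construction); scaling $A$ by $\lambda \in \mathbb{C}$ then realizes every value of $Q$, completing the construction.
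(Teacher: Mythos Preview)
Your proof is correct and follows the same overall strategy as the paper: normalize $X$ to block-diagonal form, use the hypothesis $k>\binom{n-s+1}{2}$ to find elements of $\mathcal{L}$ with vanishing lower-right block, observe that for generic $\mathcal{L}$ the projection to the lower-right block surjects onto the hyperplane $Y_2^\perp$, and then perturb $X$ along a second-order curve $X+\varepsilon A+\varepsilon^2 B$ in $\mathcal{L}$ to reach the desired limits. The differences are in execution. The paper computes adjugates of the perturbed matrix, first with constant coefficients (yielding the hypersurface $(I_{n-s}^\perp)^{-1}$), then with $b_0$ constant and the remaining $b_i$ linear; it argues that the resulting closure $\mathcal{Z}\subset\SSS^{n-s}$ is irreducible and contains $(I_{n-s}^\perp)^{-1}$, and finally exhibits a specific numerical choice of basis and coefficients showing $\mathcal{Z}\not\subseteq (I_{n-s}^\perp)^{-1}$, hence $\mathcal{Z}=\SSS^{n-s}$. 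You instead compute directly via the Schur complement and, for each invertible target $W$, solve explicitly for $A$ and $B$: the constraint $\tr(Y_2 B_{22})=0$ becomes the single equation $Q(A_{12})=-\tr(Y_2 W^{-1})$, which you satisfy by scaling a fixed $A_0$ with $Q((A_0)_{12})\neq 0$.

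Your route trades the paper's irreducibility-plus-specialization argument for an elementary pointwise construction; it avoids the somewhat delicate explicit instance at the end of the paper's proof, replacing it with the trivially verifiable open condition that some $A\in\mathcal{L}$ with $A_{22}=0$ has $Q(A_{12})\neq 0$. The paper's approach, on the other hand, gives slightly more: it identifies the total transform $\Gamma_\cL(X)$ itself, not just its image $\mathcal{L}^{-1}$. Both arguments rely on the same dimension counts and the same two-step perturbation shape.
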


We first prove Proposition~\ref{prop:badContainedInMLdefective} to see how we can apply the lemma.
Afterwards, we give the proof of Lemma~\ref{lem:pointsInLinvernse}.

\begin{proof}[Proof of Proposition~\ref{prop:badContainedInMLdefective}]
The bad locus $\mathrm{Bad}_{k,n}$ is the union of the irreducible coisotropic hypersurfaces in $\mathrm{Gr}(k, \mathbb{S}^n)$ associated to the bounded-rank loci $D_s$ where $s$ is in the range $\binom{n-s+1}{2} < k \leq \binom{n+1}{2} - \binom{s+1}{2}$ \cite[Theorem 11]{JS}.
A general point $\mathcal{L}$ in one of these hypersurfaces satisfies the following property:
\begin{align*}
\exists\; X \in \mathcal{L} \cap \mathrm{Reg}(D_s): \mathcal{L} + T_X D_s \neq \mathbb{S}^n.
\end{align*}
This implies that $\mathcal{L}$ and the tangent space $T_X D_s$ have a common non-zero element $Y$ in their annihilators.
In other words, there is a non-zero matrix $Y \in \mathcal{L}^\perp$ satisfying $XY=0$.
The rank of that matrix $Y$ is at most $n-s$.
Since $\mathcal{L}$ is general, we may assume that $\rank(Y) = n-s$. 
In fact, we may choose $\mathcal{L}$ by first fixing any $X \in \mathcal{L}$  of rank~$s$, then fixing any $Y \in \mathcal{L}^\perp$ of rank $n-s$ with $XY=0$, and finally choosing the remaining basis vectors of $\mathcal{L}$ arbitrarily.
In other words, $\mathcal{L}$ is a general point of $\mathcal{G}_{X,Y}$,
so by Lemma~\ref{lem:pointsInLinvernse} 
we see that $Y$ is contained in $\mathcal{L}^{-1}$.
It follows that $\mathcal{L}$ is contained in $\NM_{k,n}$.
\end{proof}

In the proof of Lemma \ref{lem:pointsInLinvernse}, we compute the total transform of a point in the blow-up of a linear space along the indeterminacy locus of the adjugate map. 
Since this is a technical construction, we first do this in a concrete example.
It was shown in~\cite{graph}
that the blow-up of $\mathbb{P}\mathbb{S}^n$ along $D_{n-2}$, i.e. the Zariski closure of the graph of matrix inversion on $\mathbb{P}\mathbb{S}^n$, is
\begin{align*}
    \Gamma := \left\lbrace (X,Y) \in \mathbb{P}\mathbb{S}^n \times \mathbb{P}\mathbb{S}^n \mid XY = t \cdot I_n \text{ for some } t \in \mathbb{C} \right\rbrace.
\end{align*}
For a regular linear space $\cL$ in $\SSS^n$, we use the Zariski closure
$$\Gamma_\mathcal{L} := \overline{\left\{(X,X^{-1}) \in \PP\SSS^n \times \PP\SSS^n \mid X \in \PP \cL \right\}}$$ of the graph of matrix inversion restricted to $\mathbb{P}\mathcal{L}$ to understand the reciprocal variety $\PP\mathcal{L}^{-1}$, as $\PP\cL^{-1}$ is the image of the projection of $\Gamma_{\cL}$ to the second factor.
In particular, for a point $X \in \PP \cL$, we are interested in its \emph{total transform}
$$\Gamma_\cL(X) := \{ Z \in \PP \SSS^n \mid (X,Z) \in \Gamma_\cL \} \subseteq \{ Z \in  \PP \cL^{-1} \mid XZ= t \cdot I_n \text{ for some } t \in \mathbb{C}  \}.$$

\begin{exa}\label{ex:accompanyLemma}$(n=3,\;k=5)$ Let $X,Y\in\mathbb{S}^3$ be the matrices given by \[X=\begin{bmatrix}
1 & 0 & 0 \\
0 & 0 & 0 \\
0 & 0 & 0
\end{bmatrix},\;\;\;Y=\begin{bmatrix}
0 & 0 & 0 \\
0 & 1 & 0 \\
0 & 0 & 1
\end{bmatrix}.\] Let $\mathcal{L}$ be the polar linear space $Y^{\perp}$ of $Y$ in $\mathbb{S}^3$, and note that $X$ is contained in $\mathcal{L}$. 
Setting $s=1$, the matrices $X,Y$ satisfy the conditions in Lemma \ref{lem:pointsInLinvernse}, and $\mathcal{L}$ is contained in $\mathcal{G}_{X,Y}$. 
We compute that (the affine cone over) the total transform $\Gamma_\cL(X)$ is $\{Z\in\SSS^3\mid XZ=0\}$, so the latter is contained in~$\cL^{-1}$.

A basis for $\mathcal{L}$ is given by $\{ X,B_{01},B_{02},B_1,B_2\}$, where $\left(B_{01},B_{02},B_1,B_2\right)$ is $$
\left(\left[\begin{matrix}
0&0&1\\
0&0&0\\
1&0&0
\end{matrix}\right],\left[\begin{matrix}
0&1&0\\
1&0&0\\
0&0&0
\end{matrix}\right],\left[\begin{matrix}
0&0&0\\
0&1&0\\
0&0&-1
\end{matrix}\right],\left[\begin{matrix}
0&0&0\\
0&0&1\\
0&1&0
\end{matrix}\right]\right).$$
For a matrix $M\in\mathbb{S}^n$ we denote its lower-right $2\times2$ block by $\overline{M}$. Note that $\overline{B}_{01}$ and $\overline{B}_{02}$ are both 0, and $\overline{B_1}$, $\overline{B_2}$ together span $I_2^{\perp}$ in $\mathbb{S}^2$.
To determine the total transform $\Gamma_\cL(X)$, we perturb $X$, and then compute its adjugate. Let $\varepsilon$ be an indeterminate. The first perturbation we compute is $$X + \varepsilon ( b_{01} B_{01} + b_{02} B_{02} + b_1B_1+b_2B_2) =\left[\begin{matrix}
1&\varepsilon b_{02}&\varepsilon b_{01}\\
\varepsilon b_{02}&\varepsilon b_{1}&\varepsilon b_{2}\\
\varepsilon b_{01}&\varepsilon b_{2}&-\varepsilon b_{1}
\end{matrix}\right],$$
where $(b_{01},b_{02},b_1,b_2)$ is a vector in $\mathbb{C}^4\setminus\{0\}$. The adjugate of this matrix is $$\left[\begin{matrix}
-\varepsilon^2(b_1^2+b_2^2)&\varepsilon^2(b_{02}b_1+b_{01}b_2)&\varepsilon^2(b_{02}b_2-b_{01}b_1)\\
\varepsilon^2(b_{02}b_1+b_{01}b_2)&-\varepsilon( b_{1}+\varepsilon b_{01}^2)&-\varepsilon( b_{2}-\varepsilon b_{01}b_{02})\\
\varepsilon^2(b_{02}b_2-b_{01}b_1)&-\varepsilon( b_{2}-\varepsilon b_{01}b_{02})&\varepsilon( b_{1}-\varepsilon b_{02}^2)
\end{matrix}\right].$$ Note that the lowest degree terms are all in the $2\times2$ lower-right block. Dividing by $\varepsilon$ and setting $\varepsilon=0$, we obtain the matrix
$$\left[\begin{matrix}0&0&0\\
0&-b_{1}&-b_{2}\\
0&-b_{2}&b_{1}\end{matrix}\right]=
    \left[ \begin{array}{c|c}
        0 & 0 \\ \hline
        0 & \adj(b_1 \bar B_1 + b_2 \bar B_2)
    \end{array} \right].$$
Since $\overline{B}_1, \overline{B}_2$ span $I_2^{\perp}$, this implies $\left\lbrace Z \mid XZ = 0,\; \bar Z \in (I_{2}^\perp)^{-1} \right\rbrace \subseteq \Gamma_{\mathcal{L}}(X).$

The second perturbation of $X$ that we compute is the matrix $$X + \varepsilon ( c_{01} B_{01} + c_{02}\varepsilon B_{02} + c_1\varepsilon B_1+c_2\varepsilon B_2) =\left[\begin{matrix}
1&\varepsilon^2 c_{02}&\varepsilon c_{01}\\
\varepsilon^2 c_{02}&\varepsilon^2c_{1}&\varepsilon^2 c_{2}\\
\varepsilon c_{01}&\varepsilon^2 c_{2}&-\varepsilon^2 c_{1}
\end{matrix}\right],$$where $c=(c_{01},c_{02},c_1,c_2)$ is a vector in $\mathbb{C}^4\setminus\{0\}$. We find the adjugate
$$\left[\begin{matrix}
-\varepsilon^4(c_1^2+c_2^2)&\varepsilon^3(\varepsilon c_{02}c_1+ c_{01}c_2)&\varepsilon^3(\varepsilon c_{02}c_2-c_{01}c_1)\\
\varepsilon^3(\varepsilon c_{02}c_1+c_{01}c_2)&-\varepsilon^2( c_{1}+ c_{01}^2)&-\varepsilon^2( c_{2}-\varepsilon c_{01}c_{02})\\
\varepsilon^3(\varepsilon c_{02}c_2- c_{01}c_1)&-\varepsilon^2( c_{2}-\varepsilon c_{01}c_{02})&\varepsilon^2( c_{1}-\varepsilon^2 c_{02}^2)
\end{matrix}\right].$$Again, the lowest degree terms are in the $2\times2$ lower-right block. We now divide by $\varepsilon^2$ and set $\varepsilon=0$, and obtain $$Z_c=\left[\begin{matrix}
0&0&0\\
0&-(c_{1}+c_{01}^2)&- c_{2}\\
0&-c_{2}& c_{1}
\end{matrix}\right].$$Let $\mathcal{Z}$ be the closure in $\SSS^2$ of the set $\{\overline{Z}_c \mid c\in \mathbb{C}^4\setminus\{0\}\}$. 
All elements $\overline{Z}_c\in\mathcal{Z}$ with $c=(0,c_{02},c_1,c_2)$ parametrize the hypersurface $(I_2^{\perp})^{-1}$ in $\SSS^2$ as before, so  $(I_2^{\perp})^{-1}$ is contained in $\mathcal{Z}$. 
Since $\mathcal{Z}$ is irreducible, it follows that $\mathcal{Z}$ is either equal to $(I_2^{\perp})^{-1}$ or to $\mathbb{S}^2$. However, the element $\overline{Z}_{(1,0,1,0)}$ is contained in $\mathcal{Z}$, and$$\left(\overline{Z}_{(1,0,1,0)}\right)^{-1}=\left[\begin{matrix}
-\frac{1}{2}&0\\
0&1
\end{matrix}\right]\not\in I_2^{\perp},$$ 
so $\mathcal{Z}$ is not contained in $(I_2^{\perp})^{-1}$, hence $\mathcal{Z}=\mathbb{S}^2$. 
We have shown that every matrix $Z\in\mathbb{S}^{3}$ with $XZ=0$ is contained in $\mathcal{Z}$. 
Therefore,  $\Gamma_{\mathcal{L}}(X) = \{ Z \in \PP \SSS^3 \mid XZ=0 \}$.
\end{exa}

We now give the proof of Lemma \ref{lem:pointsInLinvernse}. We generalize the construction that was done in the previous example, and we recommend reading this example alongside the proof for illustration. 

\begin{proof}[Proof of Lemma~\ref{lem:pointsInLinvernse}]
After a change of coordinates, we may assume that 
\begin{align}
\label{eq:specialChoice}
    X = \left[ \begin{array}{c|c}
        I_s & 0  \\ \hline
        0  & 0
    \end{array} \right] \text{ and }
    Y = \left[ \begin{array}{c|c}
        0 & 0  \\ \hline
        0  & I_{n-s}
    \end{array} \right].
\end{align}
In the following, we will denote by $\bar{M}$ the lower-right $(n-s)\times(n-s)$ block of a symmetric matrix $M \in \mathbb{S}^n$.
For instance, $\bar X = 0$ and $\bar Y = I_{n-s}$.

A dimension count reveals that every $\mathcal{L} \in \mathcal{G}_{X,Y}$ must contain a matrix $B_0$ with $\bar B_0 = 0$ that is linear independent from $X$.
Indeed, the vector space of symmetric matrices M satisfying $\bar M=0$
is contained in the hyperplane $Y^\perp$ and
has codimension $\binom{n-s+1}{2}-1$ in $Y^\perp$.
As $\mathcal{L}$ is also contained in $Y^\perp$, its projection away from $X$ yields a $(k-1)$-dimensional vector space inside $Y^\perp$.
Since we have $k-1 \geq \binom{n-s+1}{2}$, that vector space  contains a non-zero matrix $B_0$ with $\bar B_0=0$.

The same dimension count also shows that the image of the projection \mbox{$M \mapsto \bar M$} onto $\mathbb{S}^{n-s}$ restricted to a general $\mathcal{L} \in \mathcal{G}_{X,Y}$ is the whole hyperplane $I_{n-s}^\perp$.
In terms of a basis $\lbrace X, B_0, B_1, \ldots, B_{k-2} \rbrace$ (with $\bar B_0=0$) for a general $\mathcal{L} \in \mathcal{G}_{X,Y}$ this means that $\bar{B}_1, \ldots, \bar B_{k-2}$ span the hyperplane $I_{n-s}^\perp$.

Equipped with this knowledge, we will now prove the assertion.
Note that, for $Z\in\mathbb{S}^n$ with $(X,Z)\in\Gamma_{\mathcal{L}}$, we have $XZ=0$. In what follows we show that the converse holds as well; more specifically, we will show that 
in fact \emph{all} pairs $(X,Z)$ with $Z\in\mathbb{S}^n$, $XZ=0$, are contained in $\Gamma_\mathcal{L}$.

Apply matrix inversion to the blow-up of $\mathcal{L}$ at $X$.
In terms of a basis $\lbrace X, B_0, B_1, \ldots, B_{k-2} \rbrace$  for $\mathcal{L}$,
this means to compute the matrix $Z_{b}$ that appears as the first non-zero coefficient of the following power series in $\varepsilon$:
\begin{align}
\label{eq:blowUp}
    \adj\left(X + \varepsilon ( b_0 B_0 + b_1 B_1 + \ldots + b_{k-2} B_{k-2}) \right),  
    \end{align}
where $b = (b_0, b_1, \ldots, b_{k-2})$ is a non-zero vector of arbitrary power series  $b_i$ in $\mathbb{C}[[\varepsilon]]$.
All matrices $Z_b$ obtained in this way satisfy $(X,Z_b) \in \Gamma_\mathcal{L}$. 
In what follows we show that the closure of the set of matrices $Z_b$ where $b$ is a non-zero vector of either constants, or with $b_0$ constant and the other $b_i$ linear monomials, already contains $\{ (X,Z) \mid Z\in\mathbb{S}^n, \; XZ = 0\}$, thus proving the lemma.

Let us first compute $Z_b$ for the case where $b$ is a non-zero vector in $\mathbb{C}^4$.
The lowest degree terms in the matrix~\eqref{eq:blowUp} are of degree $n-s-1$
and appear exactly in its lower right block.
Their coefficients are the minors of size $n-s-1$ of the matrix $b_1 \bar B_1 + \ldots + b_{k-2} \bar B_{k-2} $ (since $\bar B_0=0$). 
More precisely, we see that 
\begin{align*}
    Z_b = \left[ \begin{array}{c|c}
        0 & 0 \\ \hline
        0 & \adj(b_1 \bar B_1 + \ldots + b_{k-2} \bar B_{k-2})
    \end{array} \right].
\end{align*}
Due to the generality of $\mathcal{L}$, the matrices  $\bar{B}_1, \ldots, \bar B_{k-2}$ span the hyperplane $I_{n-s}^\perp$,
so the closure of the set $\{\bar Z_b \mid b\in\mathbb{C}^4\setminus\{0\}\}$ in $\mathbb{S}^{n-s}$ equals the reciprocal hypersurface  $(I_{n-s}^\perp)^{-1}$.
Hence, we have proven so far that
\begin{align*}
    \left\lbrace (X,Z) \mid XZ = 0,\; \bar Z \in (I_{n-s}^\perp)^{-1} \right\rbrace \subseteq \Gamma_{\mathcal{L}}.
\end{align*}

Finally, we compute $Z_b$ for the case when $b$ is a non-zero vector where $b_0$ a constant and the power series $b_1,\ldots, b_{k-2}$ have only linear terms (i.e., $b_i = c_i \varepsilon$ for a constant $c_i$).
The lowest degree terms in the matrix~\eqref{eq:blowUp} are of degree $2(n-s-1)$
and appear again only in its lower right block.
Now the coefficients of these terms do not only depend on $b_1, \ldots, b_{k-2}$, but also on~$b_0$. 
The closure of the set of the resulting $\bar Z_b$ forms an irreducible subvariety  $\mathcal{Z}$ of $\mathbb{S}^{n-s}$. 
Setting $b_0=0$, we see that $\mathcal{Z}$ contains the reciprocal hypersurface  $(I_{n-s}^\perp)^{-1}$.
Hence $\mathcal{Z}$ is either equal to $(I_{n-s}^\perp)^{-1}$ or it is the whole ambient space $\mathbb{S}^{n-s}$.
The condition ``$\mathcal{Z} \subseteq (I_{n-s}^\perp)^{-1}$''
is Zariski closed in the entries of the matrices $B_0, B_1, \ldots, B_{k-2}$.
Thus, if there is one instance with $\mathcal{Z} \not\subseteq (I_{n-s}^\perp)^{-1}$, 
then we know $\mathcal{Z} = \mathbb{S}^{n-s}$ for general choices of $B_0, \ldots, B_{k-2}$ (with $\bar B_0=0$). For general $\mathcal{L} \in \mathcal{G}_{X,Y}$ we can then conclude that $\lbrace (X,Z) \mid Z\in\mathbb{S}^n,\;XZ=0 \rbrace \subseteq \Gamma_\mathcal{L}$, which proves the assertion. 

We exhibit such an instance.
Since $\bar B_1,\ldots, \bar B_{k-2}$ span the hyperplane $I_{n-s}^\perp$, 
we may assume that the first $\binom{n-s+1}{2}-1$ of these matrices are a standard basis of $I_{n-s}^\perp$.
In particular, we may assume that 
$\bar B_i$, for $1 \leq i \leq n-s-1$, is the diagonal matrix whose $i$-th entry is $1$, whose $(n-s)$-th entry is $-1$, and all other entries are $0$.
We fix $B_0$ to be the matrix with a $1$ as entries at $(1,n)$ and $(n,1)$, and all other entries are $0$.
When we choose $b_0 = 1$, $b_i = \varepsilon$ for $1 \leq i \leq n-s-1$, and $b_j = 0$ for $j \geq n-s$, a direct computation reveals that $\bar Z_b$ is the diagonal matrix with entries $(s-n, \ldots, s-n, 1)$.
As $\bar Z_b$ is invertible, we can check that $\bar Z_b$ is not contained in $(I_{n-s}^\perp)^{-1}$.
Hence, $\mathcal{Z} \not\subseteq (I_{n-s}^\perp)^{-1}$, which concludes the proof.
\end{proof}

We now prove the other inclusion $\NM_{k,n}\subseteq\Bad_{k,n}$. Let $\cL$ be a generic point in $\NM_{k,n}$. Then there is an element $Y$ in $\cL^{-1}\cap\cL^{\perp}$, which means that there is a matrix $X\in\cL$ such that $Y$ is contained in $\Gamma_{\cL}(X)$, and this implies $XY=0$. After a change of coordinates we can write \begin{align}
    X = \left[ \begin{array}{c|c}
        I_u & 0  \\ \hline
        0  & 0
    \end{array} \right] \text{ and }
    Y = \left[ \begin{array}{c|c}
        0 & 0  \\ \hline
        0  & I_{v}
    \end{array} \right],
\end{align}
where $u=\rank(X),\; v=\rank(Y)$.  
Now we start by treating some special instances of $\cL$.
We present the proof of the following lemma at the end of this section and first see how it can be applied.

\begin{lemma}\label{lem: not possible}
We have $k>1$. If $(k,u,v) = (3,n-3, 2)$, then $\cL \in  \Bad_{3,n}$.
\end{lemma}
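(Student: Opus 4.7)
The lemma consists of two independent claims, which I would handle separately.

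For "$k > 1$", I argue by contradiction. Suppose $k = 1$, so $\cL = \mathbb{C} X_0$ for some $X_0 \in \SSS^n$. Since the preceding set-up produces a non-zero element $Y \in \cL^{-1} \cap \cL^{\perp}$, the space $\cL$ must be regular, hence $X_0$ is invertible and $\cL^{-1} = \mathbb{C} X_0^{-1}$. Then every $Y = \lambda X_0^{-1} \in \cL^{\perp}$ satisfies $0 = \tr(X_0 Y) = \lambda n$, forcing $Y = 0$, a contradiction.

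For the case $(k, u, v) = (3, n-3, 2)$ (which forces $n \geq 4$, otherwise $X = 0$), it suffices to show $\cL \in \mathrm{Ch}_2(D_{n-1})$: by \cite[Theorem~11]{JS}, the only integer $s$ satisfying $\binom{n-s+1}{2} < 3 \leq \binom{n+1}{2} - \binom{s+1}{2}$ is $s = n-1$, so $\Bad_{3,n} = \mathrm{Ch}_2(D_{n-1})$. In the coordinates of the set-up, with $X = \mathrm{diag}(I_{n-3}, 0_3)$ and $Y = \mathrm{diag}(0_{n-2}, I_2)$, let $\bar M \in \SSS^3$ denote the lower-right $3 \times 3$ block of $M \in \SSS^n$, and set $\bar\cL := \{\bar M : M \in \cL\}$. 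Since $\bar X = 0$ while $X \in \cL$, the block map $\cL \to \SSS^3$ has non-trivial kernel, so $\dim \bar\cL \leq 2$, and $\dim \bar\cL^{\perp} \geq 4$ in $\SSS^3 \cong \mathbb{C}^6$.

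The main step is to replace the rank-$2$ matrix $Y \in \cL^{\perp}$ by a rank-$1$ matrix $Z \in \cL^{\perp}$ with $XZ = 0$; equivalently, to find a non-zero $v \in \mathbb{C}^3$ with $vv^{T} \in \bar\cL^{\perp}$. The symmetric rank-$1$ matrices in $\SSS^3$ form the affine cone over the Veronese surface $\nu_2(\PP^2) \subseteq \PP \SSS^3 \cong \PP^5$ of projective dimension~$2$, while $\PP \bar\cL^{\perp}$ has projective dimension at least~$3$; since $2 + 3 \geq 5$, a B\'ezout count guarantees the required $v$. Lifting, I take $\tilde v := (0, \ldots, 0, v) \in \mathbb{C}^n$ and $Z := \tilde v \tilde v^{T}$, so that $XZ = 0$ (since $\tilde v \in \ker X$) and $\tr(MZ) = v^{T} \bar M v = 0$ for all $M \in \cL$ (so $Z \in \cL^{\perp}$). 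The pair $(X, Z)$ therefore lies in the conormal variety of $D_{n-1}$---whose fiber over $X' \in D_{n-1}$ consists of the rank-$1$ matrices $v'v'^{T}$ with $v' \in \ker X'$---with $X \in \cL$ and $Z \in \cL^{\perp}$. Since $\mathrm{Ch}_2(D_{n-1})$ is the image of this conormal variety in $\Gr(3, \SSS^n)$ under the incidence $(X', Z') \mapsto \{\cL' : X' \in \cL',\; Z' \in (\cL')^{\perp}\}$, we conclude $\cL \in \mathrm{Ch}_2(D_{n-1}) = \Bad_{3,n}$.

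The main obstacle is conceptual: the given pair $(X, Y)$, of ranks $(n-3, 2)$, is a conormal pair only for $D_{n-2}$ and thus only certifies $\cL \in \mathrm{Ch}_2(D_{n-2}) \subseteq C_{3,n}$, which is strictly larger than $\Bad_{3,n}$ (see Remark~\ref{rem:involution}). One must manufacture a fresh rank-$1$ conormal direction at $X$ for $D_{n-1}$, and the dimension count securing its existence is tight (the equality $2 + 3 = 5$), which is exactly why $(k, u, v) = (3, n-3, 2)$ is singled out as a borderline case.
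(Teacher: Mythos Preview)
Your argument is correct and, for the second claim, takes a genuinely different route from the paper's.

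For $k>1$, your proof and the paper's are essentially the same observation (it is Lemma~\ref{lem:notFullRankIntersection} specialized to $\dim\cL=1$), only phrased through Theorem~\ref{thm: MLdefective equals non-empty intersection} rather than through the ML-degree directly.

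For $(k,u,v)=(3,n-3,2)$ the two proofs diverge. The paper argues by contradiction: assuming $\cL$ is not bad, it invokes Pataki's characterization (the conditions \eqref{eq:being good 1} and \eqref{eq:being good 2}) to split into the two cases $(s(\cL),s(\cL^\perp))=(n-3,3)$ and $(n-2,2)$, and in each case computes the total transform $\Gamma_\cL(X)$ via explicit perturbations to show that $Y\notin\Gamma_\cL(X)$, contradicting $Y\in\cL^{-1}$. Your argument instead produces \emph{directly} a conormal pair for $D_{n-1}$: the tight dimension count $\dim\PP\bar\cL^{\perp}+\dim\nu_2(\PP^2)\geq 3+2=5$ in $\PP\SSS^3$ yields a rank-one $Z\in\cL^\perp$ with $XZ=0$, certifying $\cL\in\mathrm{Ch}_2(D_{n-1})=\Bad_{3,n}$. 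This is shorter and more conceptual; it never uses that $Y\in\cL^{-1}$ (only $\bar X=0$, i.e.\ $\rank X\le n-3$), nor the blow-up machinery, nor Pataki's criterion. The one point that deserves a citation is the identification of $\mathrm{Ch}_2(D_{n-1})$ with the image of the conormal incidence $\{((X',Z'),\cL'):(X',Z')\in\mathcal{N}_{D_{n-1}},\;X'\in\cL',\;Z'\in(\cL')^\perp\}$; this is standard (irreducibility of $\mathcal{N}_{D_{n-1}}$ plus properness gives a closed irreducible image containing the smooth-locus contribution), and is implicit in \cite{Ko} and \cite[Theorem~17]{JS}. The paper's approach, by contrast, stays within the total-transform framework developed earlier in Section~\ref{sec:zariskiClosure} and reuses the same perturbation analysis as in the proofs of Lemma~\ref{lem:pointsInLinvernse} and Proposition~\ref{prop:NM contained in Bad}.
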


\begin{lemma}\label{lem:u+v not n}
If $u+v<n$, then $\cL$ is contained in $\Bad_{k,n}$.
\end{lemma}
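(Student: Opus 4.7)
My strategy is to reduce the hypothesis $u+v<n$ to the case $u+v=n$, whose treatment constitutes the main content of the proof of Theorem~\ref{thm:nonMaximalEqualsBad}, and thereby conclude $\cL \in \Bad_{k,n}$. Concretely, starting from the matrices $X \in \cL$ of rank $u$ and $Y \in \cL^\perp$ of rank $v$ with $XY=0$, I want to produce a new pair $(X',Y)$ or $(X,Y')$ that still satisfies the orthogonality relation but has combined rank exactly $n$.

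The natural linear subspace to examine is
\[
W := \{X' \in \cL \mid X'Y = 0\} \subseteq \cL,
\]
which contains $X$. In the adapted coordinates for $(X,Y)$, the condition $X'Y=0$ forces the columns (and, by symmetry, the rows) of $X'$ indexed by the support of $Y$ to vanish; consequently the image of any $X' \in W$ lies in $\ker Y$, so $\rank(X') \le n - v$. The plan is to exhibit some $X' \in W$ attaining rank $n-v$: the pair $(X',Y)$ would then have combined rank $n$, still lie in $\cL \times \cL^\perp$ and still satisfy the orthogonality relation, and so the $u+v=n$ analysis would close the argument. Should this fail on the $\cL$-side, I would perform the symmetric argument on $\cL^\perp$ and search for $Y' \in \cL^\perp$ with $XY'=0$ and $\rank(Y')=n-u$.

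The main obstacle is precisely this rank-maximisation step. A generic $k$-dimensional subspace of $\SSS^n$ containing a fixed matrix realises all ranks compatible with dimension counts, but here $\cL$ is only generic inside a proper irreducible component of $\NM_{k,n}$, so the standard general-position argument does not apply directly. I would instead work in block coordinates adapted to $X$ and $Y$ and exhibit an explicit perturbation $X + \varepsilon X'' \in W$ whose bottom-right $(n-v)\times(n-v)$ block acquires full rank for $\varepsilon \ne 0$; upper semicontinuity of rank would then force the generic element of $W$ to have rank $n-v$. Configurations in which no such perturbation exists should be highly constrained; I expect them to coincide with the degenerate case $(k,u,v)=(3,n-3,2)$ already handled in Lemma~\ref{lem: not possible}, which can then be invoked directly to cover the residual situation.
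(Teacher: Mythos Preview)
Your plan is a genuine departure from the paper's argument, but it contains a fatal gap.

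The paper never reduces $u+v<n$ to the case $u+v=n$. Instead it argues directly via coisotropic hypersurfaces: the data $X\in\cL$, $Y\in\cL^\perp$, $XY=0$ already place $\cL$ in $\mathrm{Ch}_{k-\mathrm{codim}(D_{n-v})}(D_{n-v})$; if $\binom{v+1}{2}<k$ this hypersurface lies inside $\Bad_{k,n}$ by \cite[Theorem~11]{JS} and we are done. When $\binom{v+1}{2}\ge k$, the paper goes the \emph{opposite} way from you: it shows by a dimension count that $A_X:=\{M\in\cL^\perp\mid XM=0\}\subset\SSS^{n-u}$ is large enough to meet $D_{v-1}$ nontrivially, producing $Y'\in\cL^\perp$ of \emph{lower} rank with $XY'=0$, and iterates until the inequality $\binom{v'+1}{2}<k$ is achieved. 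The residual cases of this descent are exactly the ones handled by Lemma~\ref{lem: not possible}.

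Your strategy breaks at the reduction step. The $u+v=n$ argument in the proof of Proposition~\ref{prop:NM contained in Bad} does not merely use $X\in\cL$, $Y\in\cL^\perp$, $XY=0$: it derives a contradiction with the hypothesis $Y\in\Gamma_\cL(X)$ (this is how $Y$ entered the picture, as an element of $\cL^{-1}$). If you replace $X$ by some $X'\in W$ of rank $n-v$, there is no reason that $Y\in\Gamma_\cL(X')$; and if you replace $Y$ by some $Y'\in\cL^\perp$ of rank $n-u$ with $XY'=0$, there is no reason that $Y'\in\cL^{-1}$ at all. Without this, the pair $(X',Y)$ or $(X,Y')$ only witnesses $\cL\in C_{k,n}$, which is already known (Corollary~\ref{cor:AcontainedinC}) and is in general strictly larger than $\Bad_{k,n}$ (Remark~\ref{rem:involution}). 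The positive-semidefiniteness of the new matrix, used in Proposition~\ref{prop:NM contained in Bad} to pin down $s(\cL)$ and $s(\cL^\perp)$, is also lost under your perturbation. A secondary issue is that your rank-maximisation on the $\cL$-side is unsupported: for small $k$ the space $W=\{X'\in\cL\mid X'Y=0\}$ may well be one-dimensional (spanned by $X$), so no perturbation inside $W$ exists.
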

\begin{proof}
Assume that we have $u+v<n$. Note that, since $XY=0$ and $Y$ is an element of $\cL^{\perp}$, we have that $\cL^{\perp}$ is contained in the coisotropic variety Ch$_{c}(D_v)$, where $c=\dim(\cL^{\perp})-\mathrm{codim}(D_v)$. 
By duality, this implies that $\cL$ is contained in Ch$_{k-\mathrm{codim}(D_{n-v})}(D_{n-v})$.
If we have $\binom{v+1}{2}<k$, then $n-v$ is either contained in the Pataki range described in \cite[Theorem 11]{JS} or it exceeds the Pataki range. 
If $n-v$ is in the range, then $\cL$ is contained in $\Bad_{k,n}$ by \cite[Theorem 11]{JS}.
If $n-v$ exceeds the range, then the coisotropic variety Ch$_{k-\mathrm{codim}(D_{n-v})}(D_{n-v})$ is a subvariety of the last coisotropic hypersurface in the Pataki range.
So $\cL$ is again contained in $\Bad_{k,n}$ by \cite[Theorem 11]{JS}, and we are done in both cases. 
Therefore, we can assume that we have \begin{equation}\label{eq:pataki range}\binom{v+1}{2}\geq k.\end{equation} 

Note that the set $\{M\in\SSS^n\mid XM=0\}$ is isomorphic to $\SSS^{n-u}$, and since $\cL^{\perp}$ has codimension $k$, it follows that the space $A_X:=\{M\in\cL^{\perp}\mid XM=0\}$, considered as a subset of $\SSS^{n-u}$, has dimension at least $\binom{n-u+1}{2}-k$. 
Consider the locus $D_{v-1}$ of matrices of rank at most $v-1$ in $\SSS^{n-u}$, which has codimension $\binom{n-u-v+2}{2}$. We conclude that the dimension of $A_X\cap D_{v-1}$ is at least $\binom{n-u+1}{2}-\binom{n-u-v+2}{2}-k$, which by (\ref{eq:pataki range}) is at least  \begin{equation}\label{eq:dimension intersection}\binom{n-u+1}{2}-\binom{n-u-v+2}{2}-\binom{v+1}{2}=(v-1)(n-u-v)-1.\end{equation}
Set $v'=v-1$, and $\delta=n-u-v$. 
If $\dim(A_X \cap D_{v-1}) = 0$, then we have (\ref{eq:dimension intersection}) $<1$, which holds if and only if either $v=1$ or $(v,\delta)=(2,1)$. In the first case, we would have $k=1$ by (\ref{eq:pataki range}). 
In the second case, we have $n-u=3$ and $k = 3$ by (\ref{eq:pataki range}), because otherwise, if $v=2 \geq k$, then we have a strict inequality in (\ref{eq:pataki range}), so $\dim(A_X \cap D_{v-1}) \geq 1$. 
Both of the cases are treated in Lemma \ref{lem: not possible}.

Now assume  $\dim(A_X \cap D_{v-1}) \geq 1$, so we have a matrix in the intersection $A_X\cap D_{v'}$, which, as before, implies that $\cL$ is contained in Ch$_{k-\mathrm{codim}(D_{n-v'})}(D_{n-v'})$. If we have $\binom{v'+1}{2}<k$, then we are again done by \cite[Theorem 11]{JS}. If, on the other hand, we have $\binom{v'+1}{2}\geq k$, then we can do the same computation as before using $v'$ instead of $v$. Therefore, we can continue this until we find an integer $v''<v$ such that $\cL$ is contained in $\mathrm{Ch}_{k-\mathrm{codim}(D_{n-v''})}(D_{n-v''})$, and  $\binom{v''+1}{2}< k$. 
\end{proof}

\begin{prop}\label{prop:NM contained in Bad}
$\NM_{k,n}$ is contained in $\Bad_{k,n}$. 
\end{prop}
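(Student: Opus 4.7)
The plan is to split the argument by whether $u+v$ is strictly less than $n$ or equal to $n$; recall that $u+v\leq n$ because $XY=0$ forces the columns of $Y$ into $\ker(X)$. The case $u+v<n$ is exactly Lemma~\ref{lem:u+v not n}, so the only remaining work is when $u+v=n$. My strategy here is to revisit the proof of Lemma~\ref{lem:u+v not n} and note that its argument uses only the incidences $X\in\cL$, $Y\in\cL^{\perp}$, $XY=0$, together with the dimension bound $\dim A_X\geq \binom{v+1}{2}-k$ on $A_X=\{M\in\cL^{\perp}\mid XM=0\}\subseteq\SSS^{n-u}$; in particular, it never uses $Y\in\cL^{-1}$, and the restriction $u+v<n$ is needed only to keep the iterated intersections with $D_{v-1}$ of positive dimension.

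When $u+v=n$ and $\binom{v+1}{2}<k$, the reasoning of Lemma~\ref{lem:u+v not n} goes through verbatim: the tangency condition places $\cL^{\perp}$ in a coisotropic variety of $D_v$, and by the duality of \cite[Theorem~17]{JS}, $\cL$ lies in $\mathrm{Ch}_{k-\binom{v+1}{2}}(D_{u})$, which sits inside $\Bad_{k,n}$ by the Pataki-range analysis already carried out in that lemma. When $u+v=n$ and $\binom{v+1}{2}>k$, the determinantal variety $D_{v-1}$ is a hypersurface in $\SSS^{v}$, so $A_X\cap D_{v-1}$ has dimension at least $\binom{v+1}{2}-k-1\geq 0$; this furnishes a non-zero $Y'\in\cL^{\perp}$ with $XY'=0$ and $\rank(Y')\leq v-1$. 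Since $u+\rank(Y')<n$, applying the argument of Lemma~\ref{lem:u+v not n} to the pair $(X,Y')$ yields $\cL\in\Bad_{k,n}$.

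The main obstacle I anticipate is the boundary case $u+v=n$ with $k=\binom{v+1}{2}$, where the dimension estimate degenerates to $-1$ and the candidate container $\mathrm{Ch}_{0}(D_{u})$ is merely a Chow hypersurface of $D_u$ that need not lie inside $\Bad_{k,n}$; Example~\ref{exa:NMfailPolarity} illustrates this, since the diagonal subspace of $\SSS^3$ realises exactly the data $(k,n,u,v)=(3,3,1,2)$ and sits in $\Bad_{3,3}\setminus\NM_{3,3}$. I expect this case to be excluded rather than directly handled: Lemma~\ref{lem:pointsInLinvernse} demands the strict inequality $k>\binom{v+1}{2}$ in order to guarantee $Y\in\cL^{-1}$ generically, so at the boundary the extra constraint $Y\in\cL^{-1}$ should, via a trace-identity obstruction in the spirit of Lemma~\ref{lem:notFullRankIntersection}, either force the rank of $Y$ to drop (returning us to the previous paragraph) or collapse the locus into a proper subvariety of a component already treated. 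Making this exclusion rigorous is the delicate final step.
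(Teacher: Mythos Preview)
Your proof has a genuine gap, and the difficulty is structural rather than merely technical. Your strategy for the case $u+v=n$ is to discard the hypothesis $Y\in\cL^{-1}$ and work only with the incidence data $X\in\cL$, $Y\in\cL^\perp$, $XY=0$. But those incidences alone place $\cL$ only in $C_{k,n}$ (this is exactly Corollary~\ref{cor:AcontainedinC}), and by Remark~\ref{rem:involution} we have $C_{k,n}=\Bad_{k,n}\cup\mathrm{Ch}_0(D_s)$ precisely when $k=\binom{n-s+1}{2}$. With $u+v=n$ and $s=u$, your acknowledged boundary case $k=\binom{v+1}{2}$ is exactly the regime where the extra Chow component $\mathrm{Ch}_0(D_u)$ appears. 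So no argument that ignores $Y\in\cL^{-1}$ can possibly separate $\cL$ from that extra component; the ``trace-identity obstruction'' you hope for cannot come from the incidence data alone.

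There is a second, related problem: your assertion that the proof of Lemma~\ref{lem:u+v not n} ``never uses $Y\in\cL^{-1}$'' is not correct. Its base case invokes Lemma~\ref{lem: not possible}, whose treatment of $(k,u,v)=(3,n-3,2)$ is a perturbation computation showing $Y\notin\Gamma_\cL(X)$ and hence requires $Y\in\cL^{-1}$. When you start at $u+v=n$ with $(k,u,v)=(3,n-3,3)$ and pass to $Y'$ of rank~$2$, you land exactly in that base case with a $Y'$ that need not lie in $\cL^{-1}$, so Lemma~\ref{lem: not possible} no longer applies.

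The paper's argument for $u+v=n$ goes in the opposite direction: it keeps $Y\in\Gamma_\cL(X)$ at the centre. Assuming $\cL$ is not bad, the characterisation of good subspaces from \cite[Theorem~5, Corollary~6]{JS} forces $s(\cL)=u$, $s(\cL^\perp)=v$ together with a block-vanishing constraint on $\cL$; a direct adjugate/perturbation computation (parallel to that in Lemma~\ref{lem:pointsInLinvernse}) then shows that every element of $\Gamma_\cL(X)$ annihilates a nonzero matrix $B_b\in I_v^\perp$, so the full-rank matrix $\bar Y=I_v$ can never appear. This yields $Y\notin\Gamma_\cL(X)$, a contradiction. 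The hypothesis $Y\in\cL^{-1}$ is thus used essentially, and this is what your approach is missing.
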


\begin{proof}It suffices to show that $\cL$ is contained in $\Bad_{k,n}$. Let $s(\cL)$ be the \textsl{spectrahedral rank} of $\cL$, i.e., $s(\cL)$ is the maximal rank among all the ranks of the positive semidefinite matrices in $\cL$. Define $s(\cL^{\perp})$ analogously. 
For a matrix $M\in\cL$, write $\overline{M}$ for the lower-right $(n-s(\cL))\times(n-s(\cL))$ block of $M$. By \cite[Theorem 5, Corollary 6]{JS}, $\cL$ is \textsl{not} bad if and only if we have both
\begin{equation}\label{eq:being good 1}
    s(\cL)+s(\cL^{\perp})=n,
\end{equation}
and for all $M\in\cL$, we have
\begin{align}\label{eq:being good 2}
&\overline{M}=0 \Rightarrow\mbox{
the upper-right $s(\cL)\times s(\cL^{\perp})$ }\mbox{ block of $M$ is zero. }\end{align}
Assume by contradiction that both (\ref{eq:being good 1}) and (\ref{eq:being good 2}) hold. By Lemma \ref{lem:u+v not n}, we can assume that $u+v=n$, and since $X$ and $Y$ are positive semidefinite, it follows that we have $s(\cL)= u,\;s(\cL^{\perp})= v$.

In what follows, we use the same techniques as in the proof of Lemma~\ref{lem:pointsInLinvernse} and in Example~\ref{ex:accompanyLemma} to show that it follows from our assumptions that $Y$ is not contained in $\Gamma_{\cL}(X)$. From this contradiction we conclude that (\ref{eq:being good 1}) and (\ref{eq:being good 2}) cannot both hold, so $\cL \in \Bad_{k,n}$. 

As before, the elements in $\Gamma_{\cL}(X)$ are given by the adjugates of all perturbations of $X$. 
Let $\{X,B_1,\ldots,B_{k-1}\}$ be a basis for $\cL$. 
For a vector $b=(b_1,\ldots,b_{k-1})\in\mathbb{C}[[\varepsilon]]^{k-1}$, where we write $b_i=b_{i0}+b_{i1}\varepsilon+\cdots$ for all $i$, set $$X_b:=X+\varepsilon\sum_{i=1}^{k-1}b_iB_i.$$
Let $b\in\mathbb{C}[[\varepsilon]]^{k-1}$ be a vector and write $B_b:=\sum_{i=1}^{k-1}b_{i0}\overline{B_i}$. 
As seen before, we have $\overline{\adj(X_b)}=\varepsilon^{n-u-1}Z_b+\mbox{h.o.t}$, with $Z_b:=\adj(B_b)$, and all other entries of $\adj(X_b)$ have higher order terms.

If we have $Z_b\neq0$, then, after dividing by $\varepsilon^{n-u-1}$ and setting $\varepsilon=0$, we have $\overline{\adj(X_b)}=Z_b\in(I_{v}^{\perp})^{-1}$, since $\{\overline{B_1},\ldots,\overline{B_{k-1}}\}$ spans a subspace of $I_v^\perp$. 
Since $\overline{Y}=I_v$, it is not equal to any of the possible non-zero $Z_b$ for varying $b$.

Hence we assume that $b$ is such that $Z_b=0$. If $B_b=0$, we have by~\eqref{eq:being good 2}, $$X_b=\left[ \begin{array}{c|c}
        I_u+\varepsilon(\ldots) & \varepsilon^2(\ldots)  \\
        \hline
        \varepsilon^2(\ldots)  & \varepsilon^2(\ldots)
    \end{array} \right],$$
which contributes the same to $\Gamma_{\cL}(X)$ after a shift of exponent. Hence we assume that $B_b\neq0$. Denote by $c$ the co-rank of $B_b$; since $Z_b=0$, we have $c\geq2$. Let $\varepsilon^d\tilde{Z}_b$ be the lowest degree term of $\adj(X_b)$; we have $d\geq n-u$. 
We will first consider the case when the latter is an equality (in which case $c=2$ as we will see in the following).

The terms in $\adj(X_b)$ of degree $n-u$ are all in the lower-right $(v\times v)$ block. They are constructed in two ways. 
First, take the constant term (which is 1) of an entry in all but one rows of the upper-left $(u\times u)$ block of $X_b$, the linear term of an entry in the remaining row of that block, and the linear term of an entry in every row of $\overline{X}_b$. 
All terms of this shape give $\sum_{r=1}^u\adj(M_{r})$, where $M_r$ is a symmetric $(n-u+1)\times(n-u+1)$ matrix whose first row is given by the $r$-th row of the matrix $\sum_{i=1}^{k-1}b_{i0}B_i$, and the remaining entries are given by $B_b$. 
Second, take the constant term of an entry in \textsl{all} rows of the upper-left $(u\times u)$ block of $X_b$, the linear term of an entry in all but one rows in $\overline{X}_b$, and the quadratic term of an entry in the remaining row.
These terms give the derivative $(d_{B_b}\adj)(\sum_{i=1}^{k-1} b_{i1}\overline{B_i}) $ of the adjugate map at $B_b \in \SSS^{n-u}$ evaluated at the matrix of quadratic terms (in $\varepsilon$).

In particular, we see $c=2$, or else all terms in $\adj(X_b)$ of degree $n-u$ would be zero.
Since $c = 2$, the first row and column of $\adj(M_r)$ is zero.
Moreover, we have $\mathrm{adj}(M_r) \cdot M_r = 0$, which implies that
$\sum_{r=1}^u \overline{\adj(M_r)} \cdot B_b = 0$.
Similarly, we see that 
$(d_{B_b}\adj)(\sum_{i=1}^{k-1} b_{i1}\overline{B_i}) \cdot B_b = 0$.
It follows that $\tilde{Z}_b\cdot B_b=0.$ 
Since $\overline{Y}=I_v,$ it cannot be equal to $\tilde{Z}_b$ since otherwise $B_b$ would be zero. 
This shows that $Y$ can not be the limit of $\adj(X_b)$ for vectors $b$ such that $d = n-u$.

The lowest degree term $\varepsilon^d \tilde{Z}_b$  of $\adj(X_b)$ can be described in a similar fashion if $d > n-u$.
In particular, this term satisfies $\tilde{Z}_b \cdot B_b=0$.
Hence, $Y$ cannot be equal to any $\tilde{Z}_b$ and thus it is not contained in $\Gamma_\cL(X)$; a contradiction.
\end{proof}

Now we can show Theorem~\ref{thm:nonMaximalEqualsBad}, and conclude  with
a proof for Lemma \ref{lem: not possible}. 

\begin{proof}[Proof of Theorem~\ref{thm:nonMaximalEqualsBad}]
This follows from Propositions \ref{prop:badContainedInMLdefective} and \ref{prop:NM contained in Bad}.\end{proof}

\begin{proof}[Proof of Lemma \ref{lem: not possible}]
If $k=1$, then $\cL$ is a point in $\PP\SSS^n$ of full rank. Therefore its ML-degree is 1, and so is the degree of $\PP\cL^{-1}$. This contradicts the fact that $\cL$ is contained in $\NM_{k,n}$.

Now assume $k=3$ and $(u,v) = (n-3,2)$. Assume by contradiction that $\cL$ is not bad, i.e., we assume that (10) and (11) hold. Note that $s(\cL^{\perp})\geq2$, since $Y$ is positive semidefinite. We distinguish two cases: either $(s(\cL),s(\cL^\perp)) = (u,3)$ or $(s(\cL),s(\cL^\perp)) = (u+1,2)$. 

In the first case, there is a positive semidefinite matrix $Y'$ of rank $3$ in $\cL^\perp$ such that $XY'=0$, so $Y'$ is zero away from the lower-right $3 \times 3$ block.
By simultaneous diagonalisation, we may assume $Y$ is as before and $Y'$ is diagonal.
The total transform $\Gamma_\cL(X)$ can be computed following the proof of Proposition~\ref{prop:NM contained in Bad}. 
For $M\in\SSS^n$, we write $\overline{M}$ for the lower-right $3\times3$ block of $M$.
We see that every matrix $Z \in \Gamma_\cL(X)$ satisfies $\overline{Z} \cdot B_b=0$, where $\overline{Z} = \tilde{Z}_b$ as defined in the proof of Proposition~\ref{prop:NM contained in Bad}.
This implies that $\Gamma_\cL(X) \subset \left\{Z\in\SSS^n\mid \overline{Z}\in\left(\overline{Y'}^\perp\right)^{-1}\right\}$, so $Y$ is not in $\Gamma_\cL(X)$; a contradiction.

In the second case, $\cL$ contains a positive semidefinite matrix $B_0$ of rank $u+1$. Since $B_0$ is positive semidefinite, this implies $B_0Y=0$. 
Therefore, the only non-zero entries of $B_0$ are in its upper-left $(u+1)\times(u+1)$ block. 
By simultaneous diagonalisation on this block, we can assume that $X$ is still in the shape it was, and $B_0$ has only entries on the diagonal, and the $(u+1)$-st diagonal entry is 1. Since $\cL$ is regular, it must contain a full-rank matrix, say $B_1$. Now $X,B_0,B_1$ are linearly independent, so they form a basis for $\mathcal{L}$, from which it follows that $B_1$ has full rank in the lower-right $2\times2$ block. 

We want to show that this implies that $Y$ is not contained in $\Gamma_{\cL}(X)$, giving a contradiction. 
As before, we show that $Y$ cannot be constructed by an adjugate of a perturbation of $X$.
For a vector $b=(b_0,b_1)\in\mathbb{C}[[\varepsilon]]^2$, define the matrix $X_b=X+\varepsilon (b_0B_0+b_1B_1)$. 

As before, the lowest degree term in $\overline{\adj(X)}$ is given by $\varepsilon^2Z_b$, with $Z_b=\adj(b_{00}\overline{B_0}+b_{10}\overline{B_1})$.
Since $\overline{B_1}$ has rank 2, the adjugate of $\overline{B_1}$ is non-zero. 
Therefore, either $Z_b\neq0$, or $b_{10}=0$. 
In the latter case, we have $b_1=\varepsilon b_{11}+\varepsilon^2 b_{12}+\cdots$. 
Analogously, The lowest degree non-zero term of $\adj(X_b)$ is of the form $\varepsilon^c \adj\left(b_{00}\overline{B_0}+b_{1,c-2}\overline{B_1}\right)$ for some $c\geq2$. 
Dividing by $\varepsilon^{c}$ and setting $\varepsilon=0$, we see that $\overline{\adj(X_b)}$ cannot be a rank 2 matrix.
Since $\overline{Y}$ has rank 2, we conclude that $Y$ is not in $\Gamma_{\cL}(X)$. 
\end{proof}

\section{Example: the case $n=3$}
In this section we describe in detail the set $\NM_{k,n}$ for $n=3$.

The hypersurface $\NM_{5,3} = \mathrm{Ch}_2(D_1)$ has degree three in $\Gr(5, \SSS^3) \cong \PP^5$, consisting of all subspaces $\cL = A^\perp$ where $\det A = 0$. This matrix satisfies $\tr(A \cdot \adj(A)) = 0$, so $A \in \cL^\perp$. Hence $A \in \cL^{-1} \cap \cL^\perp$.

The hypersurface $\NM_{4,3} \cong \mathrm{Ch}_1(D_1)$ can be identified with $\NM_{2,3} \cong \mathrm{Ch}_1(D_2)$ under $\Gr(4, \SSS^3) \cong \Gr(2, \SSS^3)$. The latter contains those pencils with Segre symbol [2,1] \cite{FMS}, with canonical representation as the span of $A_1 = \left[\begin{smallmatrix}
0 & a & 0 \\
a & 1 & 0 \\
0 & 0 & b
\end{smallmatrix}\right],
A_2 = \left[\begin{smallmatrix}
0 & 1 & 0 \\
1 & 0 & 0 \\
0 & 0 & 1
\end{smallmatrix}\right]$ for $a \neq b \in \mathbb{R}
$.
The matrix $A_1 - a A_2$ looks like
$
\left[\begin{smallmatrix}
0 & 0 & 0 \\
0 & * & 0 \\
0 & 0 & *
\end{smallmatrix}\right]
$
with adjoint
$
\left[\begin{smallmatrix}
* & 0 & 0 \\
0 & 0 & 0 \\
0 & 0 & 0
\end{smallmatrix}\right] \in \cL^{-1} \cap \cL^\perp.
$

The hypersurface $\NM_{3,3} \cong \mathrm{Ch}_2(D_2) \cong \mathrm{Ch}_0(D_1)$. The last term is the Chow form of the Veronese embedding of $\PP^2 \hookleftarrow \PP^5 = \PP(\SSS^3)$, or equivalently, the resultant of three ternary quadrics.

\bigskip
\paragraph{Acknowledgements.}
We thank
Mateusz Micha\l{}ek, Kristian Ranestad, Luca Schaffler, Tim Seynnaeve,  Bernd Sturmfels, and Bernard Teissier for helpful discussions. 
KK was supported by the Knut and Alice Wallenberg Foundation within their WASP (Wallenberg AI, Autonomous Systems and Software Program) AI/Math initiative. 

\bibliography{Q65ref}

\end{document}